\newcounter{minutes}
\newcounter{hours}
\newtheorem{lemma}{Lemma}[section]
\newtheorem{theorem}{Theorem}[section]
\newtheorem{corollary}{Corollary}[section]
\newcommand{\real}{\operatorname{Re}}
\newcommand{\loga}{\operatorname{Log}}
\keywords{$q-$Mittag-Leffler functions; univalent, starlike and convex functions; radius of starlikeness and convexity; Laguerre-P\'olya class of entire functions.}
\subjclass[2010]{30C45, 30C15,33D05, 33D15}
\begin{document}

\title{Radii of starlikeness and convexity of $q-$Mittag--Leffler functions }

\author[E. Toklu]{Evr{\.I}m Toklu}
\address{Department of Mathematics, Faculty of Education, A\u{g}r{\i} {\.I}brah{\i}m \c{C}e\c{c}en University, 04100 A\u{g}r{\i}, Turkey} 
\email{evrimtoklu@gmail.com}

\def\thefootnote{}
\footnotetext{ \texttt{File:~\jobname .tex,
printed: \number\year-0\number\month-\number\day,
\thehours.\ifnum\theminutes<10{0}\fi\theminutes}
} \makeatletter\def\thefootnote{\@arabic\c@footnote}\makeatother

\maketitle

\begin{abstract}
In this paper we deal with the radii of starlikeness and convexity of the $q-$Mittag--Leffler function for three different kinds of normalization by making use of their Hadamard factorization in such a way that the resulting functions are analytic in the unit disk of the complex plane. By applying Euler-Rayleigh inequalities for the first positive zeros of these functions tight lower and upper bounds for the radii of starlikeness of these functions are obtained. The Laguerre-P\'olya class of real entire functions plays a pivotal role in this investigation.
\end{abstract}

\section{\bf Introduction and the main results}
Frank Hilton Jackson, who was an English mathematician, studied what today is known as the $q-$calculus. In particular, he investigated some $q-$functions and the $q-$analogs of derivative and integral \cite{Jackson1}. In spite of the fact that Jackson started his studies introducing the $q-$difference operator, it is possible to say that this $q-$difference operator goes back to Euler and may go back to Heine and is reintroduced by Jackson in \cite{Jackson1}. For this reason, the $q-$difference operator is also called as the Euler-Heine-Jackson operator. After these studies, the $q-$calculus have started to appear in a generalization of many subjects, such as hypergeometric series, complex analysis, and particle physics. Because of the vast potential of its applications in solving problems on physical, engineering and earth sciences,  there has been a vivid interest on $q-$calculus from the point of view of geometric function theory. In \cite{IMS} Ismail {\it et al.} introduced and investigated the generalized class of starlike functions by making use of the $q-$difference operator. Some interesting applications of $q-$calculus seen on geometric function theory can be found in \cite{AD},\cite{AS}, \cite{RS}, \cite{EToklu} and the refences therein. Recently, in \cite{AB} Akta\c{s} and Baricz determined bounds for radii of starlikeness of some $q-$Bessel functions. And also, in \cite{SB} Srivastava and Bansal studied on close-to-convexity of certain family of $q-$Mittag-Leffler functions.

The gist of the present investigation is to determine, by using the method of Baricz {\it et al.} (see \cite{BKS}, \cite{BS}, \cite{BS1}), the radii of starlikeness and convexity of $q-$Mittag-Leffler function. Moreover, some intriguing applications of the technique of Baricz, which gives us a much simpler approach on determining the some geometric properties of special functions, can be found in \cite{ABO}, \cite{ATO}, \cite{AP},\cite{BTK}, \cite{Toklu}, \cite{Toklu1}, \cite{TAO} and the references therein. 
 
Before starting to present our main results, we would like to draw attention to some basic concepts needed for building our main results. For $r>0$ we denote by $\mathbb{D}_r=\left\{z\in\mathbb{C}: |z|<r\right\}$ the open disk of radius $r$ centered at the origin. Let $f:\mathbb{D}_r\to\mathbb{C}$ be the function defined by
\begin{equation}
f(z)=z+\sum_{n\geq 2}a_{n}z^{n},  \label{eq0}
\end{equation}
where $r$ is less or equal than the radius of convergence of the above power series. Denote by $\mathcal{A}$ the class of allanalytic functions of the form Eqn. \eqref{eq0}, that is, normalized by the conditions $f(0)=f^{\prime}(0)-1=0.$ We say that the function $f,$ defined by Eqn. \eqref{eq0}, is starlike function in $\mathbb{D}_r$ if $f$ is univalent in $\mathbb{D}_r$, and the image domain $f(\mathbb{D}_r)$ is a starlike domain in $\mathbb{C}$ with respect to the origin (see \cite{Dur} for more details). Analytically, the function $f$ is starlike in $\mathbb{D}_r$ if and only if $$\real\left( \frac{zf^{\prime }(z)}{f(z)}\right) >0 \quad \mbox{for all}\ \ z\in\mathbb{D}_r.$$ For $\alpha \in [0,1)$ we say that the function $f$ is starlike of order $\alpha $ in $\mathbb{D}_r$ if and only if 
$$\real\left( \frac{zf^{\prime }(z)}{f(z)}\right) >\alpha \quad \mbox{for all}\ \ z\in\mathbb{D}_r.$$
The radius of starlikeness of order $\alpha$ of the function $f$ is defined as the real number
\begin{equation*}
r_{\alpha }^{\star}(f)=\sup\left\{r>0\left|\real\left(\frac{zf^{\prime }(z)}{f(z)}\right)  >\alpha \;\text{for all }z\in	\mathbb{D}_r\right.\right\}.
\end{equation*}
Note that $r^{\star}(f)=r_{0}^{\star}(f)$ is in fact the largest radius such that the image region $f(\mathbb{D}_{r^{\star}(f)})$ is a starlike domain with respect to the origin.
The function $f,$ defined by Eqn. \eqref{eq0}, is convex in the disk $\mathbb{D}_r$ if $f$ is univalent in $\mathbb{D}_r$, and the image domain $f(\mathbb{D}_r)$ is a convex domain in $\mathbb{C}.$ Analytically, the function $f$ is convex in $\mathbb{D}_r$ if and only if
$$\real\left(  1+\frac{zf^{\prime \prime }(z)}{f^{\prime }(z)}\right)>0  \quad \mbox{for all}\ \ z\in\mathbb{D}_r.$$
For $\alpha \in[0,1)$ we say that the function $f$ is convex of order $\alpha $ in $\mathbb{D}_r$ if and only if 
$$\real\left( 1+\frac{zf^{\prime \prime }(z)}{f^{\prime }(z)}\right)
>\alpha \quad \mbox{for all}\ \ z\in\mathbb{D}_r.$$ 
We shall denote the radius of convexity of order $\alpha $ of the function $f$ by the real number
\begin{equation*}
r_{\alpha }^{c}(f)=\sup \left\{ r>0 \left|\real\left( 1+
\frac{zf^{\prime \prime }(z)}{f^{\prime }(z)}\right) >\alpha \;\text{for all }z\in\mathbb{D}_r\right.\right\} .
\end{equation*}
Note that $r^{c}(f)=r_{0}^{c}(f)$ is the largest radius such that the image region $f(\mathbb{D}_{r^{c}(f)})$ is a convex domain.

We recall that a real entire function $q$ belongs to the  Laguerre-P\'{o}lya class $\mathcal{LP}$ if it can be represented in the form $$q(x)=cx^{m}e^{-ax^2+bx}\prod_{n\geq1}\left(1+\frac{x}{x_n}\right)e^{-\frac{x}{x_n}},$$ with $c,b,x_n\in\mathbb{R}, a\geq0, m\in\mathbb{N}_0$ and $\sum\frac{1}{{x_n}^2}<\infty.$ We note that the class $\mathcal{LP}$ is the complement of the space of polynomials whose zeros are all real in the topology induced by the uniform convergence on the compact sets of the complex plane of polynomials with only real zeros. For more details on the class $\mathcal{LP}$ we refer to \cite[p. 703]{DC} and to the references therein.

\subsection{The three parameter generalization of the $q-$Mittag-Leffler function} First of all, we note that throughout of this paper, unless otherwise stated, $q$ is a positive number less than $1$ and by the word “basic” we mean a $q-$analogue. Now, let us consider the function $E_{\alpha,\beta}(z;q)$, which is called as $q-$Mittag-Leffler function,  defined by
\[E_{\alpha,\beta}(z;q)=\sum_{n\geq 0}\frac{q^{\alpha n(n-1)/2}}{\Gamma_{q}(n\alpha+\beta)}z^n, \quad (z\in \mathbb{C}, \alpha>0, \beta\in\mathbb{C})\]
where $\Gamma_{q}$ is the $q-$gamma function defined for $z\in\mathbb{C}-\left\lbrace-n :n\in \mathbb{N}_{0} \right\rbrace $ by
\[\Gamma_{q}(z)=\frac{(q;q)_{\infty}}{(q^{z};q)_{\infty}}(1-q)^{1-z}, \quad 0<q<1\]
and 
\[(a;q)_{0}=1, (a;q)_{n}=\prod_{k=1}^{n}(1-aq^{k-1}), (a;q)_{\infty}=\prod_{k\geq 1}(1-aq^{k-1}).\]
It is worthy to mention that the $q-$gamma function was introduced by Thomae \cite{Thomae} and later by Jackson \cite{Jackson}. Since $\Gamma_{q}(z)$ has no zeros, then $1/\Gamma_{q}(z)$ is entire function with zeros at $z=-n, n\in \mathbb{N}_{0}.$ It is clear that
\[\Gamma_{q}(n)=\frac{(q;q)_{n-1}}{(1-q)^{n-1}} \quad n\in \mathbb{N}.\]
Moreover, It is well known that for $x>0,$ $\Gamma_{q}(x)$ is the unique logarithmically convex function that satisfies the following relations
\[\Gamma_{q}(x+1)=\frac{1-q^x}{1-q}\Gamma_{q}(x),\quad \Gamma_{q}(1)=1.\]
For more historical remarks about the $q-$gamma function and its intriguing applications one can consult on \cite{AAR}, \cite{AM}, \cite{Ernst} and \cite{GR}.

We know that the function $z\mapsto E_{\alpha,\beta}(z;q)$ has infinitely many zeros. In \cite{AM} the authors was proven that for specific values of $\alpha$ and $\beta,$ $ E_{\alpha,\beta}(z;q),$ $0<q<1,$ may have only a finite number of non-real zeros. Moreover, if $q$ satisfies an additional conditions then the zeros of the function $z\mapsto E_{\alpha,\beta}(z;q)$ are all real. For more detail one can refer to \cite{Mansour} and \cite{Osler}.

Let us consider the function
\[\mathscr{E}_{\gamma,\sigma}(z;q)=E_{2,\gamma+1}(-\sigma^2 z;q), \quad (z\in \mathbb{C}),\]
where $\sigma$ is a fixed positive number and $0\leq \gamma<2.$ It is obvious that the function $z\mapsto\mathscr{E}_{\gamma,\sigma}(z;q)$ is of the form
\begin{equation}\label{qML}
\mathscr{E}_{\gamma,\sigma}(z;q)=\sum_{n\geq 0}\frac{(-1)^n\sigma^{2n}q^{n(n-1)}}{\Gamma_{q}(2n+\gamma+1)}z^n.
\end{equation}
It is worthy to mention that we have the following relations
\[\mathscr{E}_{0,\sigma}(z^2;q)=\cos(q^{-\frac{1}{2}}\sigma z;q) \text{ \ and \ } \mathscr{E}_{1,\sigma}(z^2;q)=\frac{\sin(q^{-1}\sigma z;q)}{q^{-1}\sigma z},\]
where $\sin(.;q)$ and $\cos(.;q)$ stand for the $q-$trigonometric functions. For some interesting applications of  $q-$trigonometric functions one can consult on \cite{AM} and \cite{Ernst}. And also, we note that Annabay and Mansour \cite[see Chapter 2]{AM} proved that the zeros of the functions $\cos (.;q)$ and $\sin (.;q)$ are real and simple.

We note that throughout this investigation, we shall focus on the function $z\mapsto \mathscr{E}_{\gamma,\sigma}(z;q)$ defined by \eqref{qML}. It is easy to check that the function $z \mapsto \mathscr{E}_{\gamma,\sigma}(z^2;q)$ is not of the class $\mathcal{A}.$ Thus first we shall perform some natural normalization. We define three functions originating from $\mathscr{E}_{\gamma,\sigma}(.;q):$
\begin{align*}
f_{\gamma,\sigma}(z;q)&=\left(z^{\gamma+1}\Gamma_{q}(\gamma+1) \mathscr{E}_{\gamma,\sigma}(z^2;q) \right)^{\frac{1}{\gamma+1}}, \\
g_{\gamma,\sigma}(z;q&=z\Gamma_{q}(\gamma+1)\mathscr{E}_{\gamma,\sigma}(z^2;q),\\
h_{\gamma,\sigma}(z;q)&=z\Gamma_{q}(\gamma+1)\mathscr{E}_{\gamma,\sigma}(z;q).
\end{align*}
It is obvious that each of these functions are of the class $\mathcal{A}.$ Of course, it can be written infinitely many other normalization; the main motivation to consider the above ones is the studied normalization in the literature of Bessel, $q-$Bessel, Mittag-Leffler, Struve, Lommel and Wright functions. Moreover, it is worth mentioning here that in fact 
\[f_{\gamma,\sigma}(z;q)=\exp\left[\frac{1}{\gamma+1}\loga(z^{\gamma+1}\Gamma_{q}(\gamma+1)\mathscr{E}_{\gamma,\sigma}(z^2;q))\right], \]
where $\loga$ represents the principle branch of the logarithm function and every many-valued function considered in this paper are taken with the principal branch.

The following lemma, which characterize the reality of zeros of the function $z\mapsto \mathscr{E}_{\gamma,\sigma}(.;q),$  take a leading part in building up our main results. For some results about the zeros of some $q-$functions one can refer to \cite{AM}, \cite{AM1} and the references therein.
\begin{lemma}\cite[p. 220]{AM} \label{Lemma1}
Let $0 \leq\gamma<2.$ Then
\begin{itemize}
	\item [\bf 1.] If $q$ satisfies the condition
		\[q^{-1}(1-q)(1-q^{\gamma+1})(1-q^{\gamma+2})>1,\text{ \ \ } \gamma \in (0,2), \quad \gamma \neq 1\] 
		then the zeros of $z \mapsto \mathscr{E}_{\gamma,\sigma}(z^2;q) $ are all real, simple, symmetric and its positive zeros lie in the intervals, for $n\in \mathbb{N}$
		\begin{align*}
		\varepsilon_{\gamma,\sigma,n}(q) \in \left\{ \begin{array}{cc} 
		\left(\frac{q^{-n+\frac{3}{2}}\sqrt{(1-q^{\gamma+1})(1-q^{\gamma+2})}}{\sigma(1-q)}, \frac{q^{-n+\frac{1}{2}}\sqrt{(1-q^{\gamma+1})(1-q^{\gamma+2})}}{\sigma(1-q)} \right),  & \gamma\in (0,1), \\
		\left(\frac{q^{-n+\frac{5}{2}}\sqrt{(1-q^{\gamma+1})(1-q^{\gamma+2})}}{\sigma(1-q)},\frac{q^{-n+\frac{3}{2}}\sqrt{(1-q^{\gamma+1})(1-q^{\gamma+2})}}{\sigma(1-q)}\right),  & \gamma\in (1,2), \\
		\end{array} \right.
		\end{align*}
		one zero in each interval.
		\item [\bf 2.] If $\gamma \in \left\lbrace 0,1\right\rbrace $ then $\mathscr{E}_{\gamma,\sigma}(z^2;q) $ has only real, simple and symmetric zeros such that
		\[\varepsilon_{0,\sigma,n}(q)=\frac{q^{\frac{1}{2}}x_m}{\sigma} \text{ \ and \ } \varepsilon_{1,\sigma,n}(q)=\frac{qy_m}{\sigma} \quad (m\in \mathbb{N}), \]
		where $x_m$ and $y_m$ are, respectively, the positive zeros of the functions $\cos(z;q)$ and $\sin(z;q).$
\end{itemize}
\end{lemma}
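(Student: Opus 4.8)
The plan is to transfer the statement into a question about the positive zeros of the single-variable entire function $w\mapsto\mathscr{E}_{\gamma,\sigma}(w;q)=\sum_{n\ge0}(-1)^n a_n w^n$, where $a_n=\sigma^{2n}q^{n(n-1)}/\Gamma_{q}(2n+\gamma+1)>0$, and then to invoke a Hutchinson-type criterion from the theory of the Laguerre--P\'olya class. Since $z\mapsto\mathscr{E}_{\gamma,\sigma}(z^2;q)$ is even, its zero set is automatically symmetric about the origin, and reality together with simplicity of its zeros is equivalent, through $z=\pm\sqrt{w}$, to positivity and simplicity of the zeros of $w\mapsto\mathscr{E}_{\gamma,\sigma}(w;q)$. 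First I would record, using the functional equation $\Gamma_{q}(x+1)=\frac{1-q^{x}}{1-q}\Gamma_{q}(x)$, the coefficient ratio $\frac{a_n}{a_{n+1}}=\frac{(1-q^{2n+\gamma+1})(1-q^{2n+\gamma+2})}{(1-q)^2 q^{2n}\sigma^2}$, whence the Tur\'an quotient $\frac{a_n^2}{a_{n-1}a_{n+1}}=q^{-2}\,\frac{(1-q^{2n+\gamma+1})(1-q^{2n+\gamma+2})}{(1-q^{2n+\gamma-1})(1-q^{2n+\gamma})}$ for every $n\ge1$.

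For the two exceptional parameters $\gamma\in\{0,1\}$ I would simply appeal to the closed forms $\mathscr{E}_{0,\sigma}(z^2;q)=\cos(q^{-1/2}\sigma z;q)$ and $\mathscr{E}_{1,\sigma}(z^2;q)=\sin(q^{-1}\sigma z;q)/(q^{-1}\sigma z)$, combined with the result of Annaby and Mansour \cite{AM} that the $q$-cosine and the $q$-sine have only real and simple zeros; rescaling their positive zeros $x_m,y_m$ by the factors $q^{1/2}/\sigma$ and $q/\sigma$ produces exactly the stated $\varepsilon_{0,\sigma,n}(q)$ and $\varepsilon_{1,\sigma,n}(q)$.

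The conceptual core is the range $\gamma\in(0,2)\setminus\{1\}$. The key observation is that the hypothesis $q^{-1}(1-q)(1-q^{\gamma+1})(1-q^{\gamma+2})>1$ secretly forces $q<\tfrac12$: the factor $q^{-1}(1-q)=\frac1q-1$ is decreasing and equals $1$ at $q=\tfrac12$, while $(1-q^{\gamma+1})(1-q^{\gamma+2})<1$, so the product cannot exceed $1$ once $q\ge\tfrac12$. Consequently $q^{-2}>4$, and since each of the ratios $\frac{1-q^{2n+\gamma+1}}{1-q^{2n+\gamma-1}}$ and $\frac{1-q^{2n+\gamma+2}}{1-q^{2n+\gamma}}$ exceeds $1$, the Tur\'an quotient above satisfies $\frac{a_n^2}{a_{n-1}a_{n+1}}>q^{-2}>4$ for all $n\ge1$. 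By Hutchinson's theorem, a classical sufficient condition for membership in the Laguerre--P\'olya class (cf. \cite{DC}), the function $u\mapsto\sum_{n\ge0}a_n u^n=\mathscr{E}_{\gamma,\sigma}(-u;q)$ has only negative, real and simple zeros; equivalently $w\mapsto\mathscr{E}_{\gamma,\sigma}(w;q)$ has only positive, real and simple zeros, which settles reality, simplicity and symmetry in one stroke.

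It remains to localize the positive zeros $\tau_n=\varepsilon_{\gamma,\sigma,n}(q)^2$ in the prescribed intervals, and this I expect to be the main obstacle. The natural heuristic is that each $\tau_n$ sits close to the ratio $a_{n-1}/a_n$; indeed a direct computation shows $a_{n-1}/a_n$ already lies in the stated interval for $\gamma\in(0,1)$, the upper endpoint being available precisely because the hypothesis yields $q^{-1}(1-q^{\gamma+1})(1-q^{\gamma+2})>\frac1{1-q}>1$. To turn this into proof I would either analyse the sign of $\mathscr{E}_{\gamma,\sigma}(\cdot\,;q)$ at the endpoints $w=q^{-2n+1}a_0/a_1$ and $w=q^{-2n+3}a_0/a_1$, where the super-geometric decay $q^{n(n-1)}$ of the coefficients makes a single term dominate and forces a sign change, or exploit an interlacing of the zeros of $\mathscr{E}_{\gamma,\sigma}$ with those of the $q$-trigonometric function of the adjacent integer parameter ($\gamma=0$ when $\gamma\in(0,1)$ and $\gamma=1$ when $\gamma\in(1,2)$), an interlacing that would also account for the index shift between the two families of intervals. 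The delicate point throughout is keeping the endpoint estimates sharp enough that the localizing intervals are pairwise disjoint and each captures exactly one zero.
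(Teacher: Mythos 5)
A point of orientation first: the paper does not prove this lemma at all --- it is imported verbatim from Annaby and Mansour \cite[p.~220]{AM} --- so there is no internal proof to measure yours against; what follows is an assessment of your argument on its own terms. The parts you actually carry out are correct. The coefficient ratio and the Tur\'an quotient
$a_n^2/(a_{n-1}a_{n+1})=q^{-2}\,\frac{(1-q^{2n+\gamma+1})(1-q^{2n+\gamma+2})}{(1-q^{2n+\gamma-1})(1-q^{2n+\gamma})}$
are computed correctly; the observation that the hypothesis forces $q<\tfrac12$ (since $(1-q^{\gamma+1})(1-q^{\gamma+2})<1$ leaves $q^{-1}(1-q)>1$) is right; and the strict Hutchinson inequality $a_n^2/(a_{n-1}a_{n+1})>4$ for all $n\ge1$ does give that $u\mapsto\sum_{n\ge0}a_nu^n$ has only real, negative, simple zeros, hence that $z\mapsto\mathscr{E}_{\gamma,\sigma}(z^2;q)$ has only real, simple, symmetric zeros. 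The reduction of $\gamma\in\{0,1\}$ to the $q$-cosine and $q$-sine via the stated closed forms is also fine as a citation. Since the rest of the paper only ever uses reality and simplicity of the zeros (in Lemma 1.2 and the theorems that follow), this Hutchinson route would in fact suffice for the paper's purposes, and it is arguably cleaner than quoting the localization result wholesale.

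It does not, however, prove the lemma as stated: the localization of $\varepsilon_{\gamma,\sigma,n}(q)$ in the prescribed intervals, with exactly one zero per interval, is the bulk of the lemma's content, and you leave it as a pair of named but unexecuted strategies. Concretely, you would have to show that $w\mapsto\mathscr{E}_{\gamma,\sigma}(w;q)$ changes sign at each endpoint $w=q^{-2n+1}C$ and $w=q^{-2n+3}C$, where $C=(1-q^{\gamma+1})(1-q^{\gamma+2})/(\sigma^2(1-q)^2)$, \emph{and} that no zeros lie outside the union of these intervals (otherwise ``one zero in each interval'' does not account for all of them). The sign-change step is a genuine quantitative estimate --- one must dominate $\sum_{k\neq n}a_kw^k$ by the single term $a_nw^n$ at those points using the decay $q^{k(k-1)}$, and it is precisely there that the full hypothesis $q^{-1}(1-q)(1-q^{\gamma+1})(1-q^{\gamma+2})>1$, rather than its weak consequence $q<\tfrac12$, earns its keep. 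Checking that the ratio $a_{n-1}/a_n$ lands in the stated interval is only a heuristic: nothing in your argument forces the $n$th zero to sit near that ratio, and the index shift between the $\gamma\in(0,1)$ and $\gamma\in(1,2)$ families of intervals is observed but never derived. The alternative interlacing route would in turn require an independent proof of interlacing with the $q$-trigonometric zeros, which you also do not supply.
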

The following lemma,  which we believe is of independent interest, plays a pivotal role in proving our main results which are related to radii of starlikeness and convexity of functions $f_{\gamma,\sigma}(z;q)$, $g_{\gamma,\sigma}(z;q)$, and $h_{\gamma,\sigma}(z;q).$
\begin{lemma} \label{q-Mittag-LefflerLemma}
	Let $\sigma$ is a fixed positive real number,  $0\leq \gamma <2.$ Moreover, under the conditions of \Cref{Lemma1} the function $z \mapsto \mathscr{E}_{\gamma,\sigma}(z^2;q) $ has infinitely many zeros which are all real. Denoting by $\varepsilon_{\gamma,\sigma,n}(q)$ the $n$th positive zero of $z\mapsto\mathscr{E}_{\nu,c}(z^2;q)$, under the same conditions the Weierstrassian decomposition
	\begin{equation}\label{InfiniteProductq-ML}
	\mathscr{E}_{\gamma,\sigma}(z^2 ;q)=\frac{1}{\Gamma_{q}(\gamma+1)}\prod_{n\geq 1}\left(1-\frac{z^2}{\varepsilon_{\gamma,\sigma,n}^2(q)}\right) 
	\end{equation}
	is fulfilled, and this product is uniformly convergent on compact subsets of the complex plane. Moreover, if we denote by $\xi_{\gamma,\sigma,n}(q)$ the nth positive zero of $\Psi_{\gamma,\sigma}^{\prime}(z;q)$, where $\Psi_{\gamma,\sigma}(z;q)=z^{\gamma+1}\mathscr{E}_{\gamma,\sigma}(z^2;q)$, then positive zeros of $\varepsilon_{\gamma,\sigma,n}(q)$ and $\xi_{\gamma,\sigma,n}(q)$ are interlaced. In other words, the zeros satisfy the chain of inequalities	$$\xi_{\gamma,\sigma,1}(q)<\varepsilon_{\gamma,\sigma,1}(q)<\xi_{\gamma,\sigma,2}(q)<\varepsilon_{\gamma,\sigma,2}(q)<\dots_{.}$$
\end{lemma}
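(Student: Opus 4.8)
The plan is to treat $w\mapsto\mathscr{E}_{\gamma,\sigma}(w;q)$ as an entire function of a single complex variable, pin down its growth order, apply Hadamard's factorization theorem to obtain \eqref{InfiniteProductq-ML}, and then deduce the interlacing from the Mittag--Leffler expansion of the logarithmic derivative of $\Psi_{\gamma,\sigma}(\cdot;q)$ together with a monotonicity argument.

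First I would show that $w\mapsto\mathscr{E}_{\gamma,\sigma}(w;q)=\sum_{n\geq 0}a_n w^n$, with $a_n=(-1)^n\sigma^{2n}q^{n(n-1)}/\Gamma_{q}(2n+\gamma+1)$, is an entire function of order zero. Since $0<q<1$, we have $-\log|a_n|=-2n\log\sigma-n(n-1)\log q+\log\Gamma_{q}(2n+\gamma+1)$, and the term $-n(n-1)\log q$ grows like $n^2|\log q|$ and dominates the others; hence $\rho=\limsup_{n\to\infty}\frac{n\log n}{-\log|a_n|}=0$. By \Cref{Lemma1} (in either of its two cases, both of which yield all-real, simple, symmetric zeros) every zero of $z\mapsto\mathscr{E}_{\gamma,\sigma}(z^2;q)$ is real, so the zeros of $w\mapsto\mathscr{E}_{\gamma,\sigma}(w;q)$ are exactly the positive reals $\varepsilon_{\gamma,\sigma,n}^2(q)$. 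An order-zero entire function carries no exponential factor and no genus (convergence-generating) terms in its Hadamard factorization, so $\mathscr{E}_{\gamma,\sigma}(\cdot;q)$ lies in the Laguerre--P\'olya class $\mathcal{LP}$, and normalizing by the value at the origin $\mathscr{E}_{\gamma,\sigma}(0;q)=1/\Gamma_{q}(\gamma+1)$ and substituting $w=z^2$ yields \eqref{InfiniteProductq-ML}. Uniform convergence on compact sets follows from $\sum_{n\geq1}1/\varepsilon_{\gamma,\sigma,n}^2(q)<\infty$, which is immediate from the geometric lower bounds for $\varepsilon_{\gamma,\sigma,n}(q)$ recorded in \Cref{Lemma1}.

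For the interlacing I would work on the positive real axis, where $z^{\gamma+1}$ is well defined. Taking the logarithmic derivative of \eqref{InfiniteProductq-ML}, term-by-term differentiation (legitimate by the uniform convergence just established) gives, for $z>0$ away from the $\varepsilon_{\gamma,\sigma,n}(q)$,
\[\frac{\Psi_{\gamma,\sigma}'(z;q)}{\Psi_{\gamma,\sigma}(z;q)}=\frac{\gamma+1}{z}-\sum_{n\geq1}\frac{2z}{\varepsilon_{\gamma,\sigma,n}^2(q)-z^2}.\]
Differentiating once more shows that this expression is strictly decreasing on every interval free of poles, because
\[\frac{d}{dz}\left(\frac{\Psi_{\gamma,\sigma}'(z;q)}{\Psi_{\gamma,\sigma}(z;q)}\right)=-\frac{\gamma+1}{z^2}-\sum_{n\geq1}\frac{2\left(\varepsilon_{\gamma,\sigma,n}^2(q)+z^2\right)}{\left(\varepsilon_{\gamma,\sigma,n}^2(q)-z^2\right)^2}<0.\]
On $(0,\varepsilon_{\gamma,\sigma,1}(q))$ the ratio runs from $+\infty$ (as $z\to0^+$, dominated by $(\gamma+1)/z$) down to $-\infty$ (as $z\to\varepsilon_{\gamma,\sigma,1}(q)^-$, dominated by the $n=1$ term), and on each subsequent gap $(\varepsilon_{\gamma,\sigma,n-1}(q),\varepsilon_{\gamma,\sigma,n}(q))$ it again runs from $+\infty$ to $-\infty$. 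Strict monotonicity forces exactly one zero $\xi_{\gamma,\sigma,n}(q)$ of $\Psi_{\gamma,\sigma}'(\cdot;q)$ in each such interval; since the $\varepsilon_{\gamma,\sigma,n}(q)$ are simple, $\Psi_{\gamma,\sigma}'(\cdot;q)$ does not vanish at them, and we obtain precisely the chain $\xi_{\gamma,\sigma,1}(q)<\varepsilon_{\gamma,\sigma,1}(q)<\xi_{\gamma,\sigma,2}(q)<\varepsilon_{\gamma,\sigma,2}(q)<\cdots$.

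The step I expect to be the main obstacle is the rigorous justification of the order-zero claim together with the attendant convergence of $\sum_{n\geq1}1/\varepsilon_{\gamma,\sigma,n}^2(q)$ and the legitimacy of the term-by-term logarithmic differentiation, since everything downstream---both the product formula and the Mittag--Leffler expansion driving the interlacing---rests on these analytic facts. Once the order of growth and the uniform convergence are secured, the remaining sign analysis at the endpoints of each gap and the monotonicity computation are routine.
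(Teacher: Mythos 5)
Your proposal is correct and follows essentially the same route as the paper: compute the growth order of the entire function from its Taylor coefficients (the $q^{n(n-1)}$ factor forcing order zero), invoke \Cref{Lemma1} for the reality of the zeros, apply Hadamard's factorization to get \eqref{InfiniteProductq-ML} and membership in $\mathcal{LP}$, and then derive the interlacing from the strict monotonicity of $\Psi_{\gamma,\sigma}'/\Psi_{\gamma,\sigma}$ on each interval between consecutive poles. The only cosmetic differences are that you justify $\sum_{n\geq1}\varepsilon_{\gamma,\sigma,n}^{-2}(q)<\infty$ directly from the interval bounds in \Cref{Lemma1} and treat the initial interval $(0,\varepsilon_{\gamma,\sigma,1}(q))$ explicitly, while the paper instead uses the order-zero property to conclude there are infinitely many zeros and leans on closure of $\mathcal{LP}$ under differentiation; neither difference changes the substance.
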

\begin{proof}
As a clearly stated in \Cref{Lemma1} the function $z \mapsto \mathscr{E}_{\gamma,\sigma}(z^2;q) $ has real zeros under the condition of the Lemma. Next, we need to calculate the growth order of the  function $\mathscr{E}_{\gamma,\sigma}(z^2;q).$ We have
\[\rho(\mathscr{E}_{\gamma,\sigma}(z^2;q))=\limsup_{n\rightarrow \infty}\frac{n\log n}{-\log \left| c_n\right| }\]
where $c_n$ stands for the coefficient of $z^{2n}$ stated in \eqref{qML}, i.e, 
\[c_n=\frac{(-\sigma^2)^n q^{n(n-1)}}{\Gamma_{q}(2n+\gamma+1)}, \quad (n \in \mathbb{N}_{0}).\]
Hence
\[-\log\left| c_n\right|=-n(n-1)\log q -2n\log \sigma +\log\left|\Gamma_{q}(2n+\gamma+1) \right|. \]
Making use of (Denklem numarası) and the definition of the $q-$gamma function, c.f. (denklem numarası) we get
\begin{align}
\log\left|\Gamma_{q}(2n+\gamma+1) \right|&=\log\left|\frac{(q;q)_{\infty}}{\left( q^{2n+\gamma+1};q\right)_{\infty}}(1-q)^{-2n-\gamma} \right| \nonumber\\
&=\log (q;q)_{\infty} -(2n+\gamma)\log (1-q) -\log\left|(q^{2n+\gamma+1};q)_{\infty} \right|, 
\end{align}
and
\begin{align}
\log\left|(q^{2n+\gamma+1};q)_{\infty} \right|&=\log\left( \prod_{k\geq0}\left|1-q^{2n+\gamma+k+1} \right| \right)=\log\left(\lim_{m\rightarrow \infty}\prod_{k=0}^{m} \left|1-q^{2n+\gamma+k+1} \right|\right)  \nonumber\\
&=\lim_{m\rightarrow \infty}\sum_{k=0}^{m}\log \left|1-q^{2n+\gamma+k+1} \right|=\sum_{k\geq 0}\log\left|1-q^{2n+\gamma+k+1} \right|\nonumber.
\end{align}
Since
\[\log\left|1-q^{2n+\gamma+k+1} \right|\leq \log\left(1+ \left|q^{2n+\gamma+k+1} \right|\right)\leq \left|q^{2n+\gamma+k+1} \right|=q^{2n+\gamma+k+1},\]
we obtain
\[\sum_{k\geq 0}\log\left|1-q^{2n+\gamma+k+1} \right|\leq \sum_{k\geq 0}q^{2n+\gamma+k+1}=\frac{q^{2n+\gamma+1}}{1-q}.\]
Consequently, we get
\[\lim_{n\rightarrow \infty}\frac{\log\left| (q^{2n+\gamma+1};q)_{\infty}\right| }{n \log n}=0,\]
which implies that
\[\lim_{n\rightarrow \infty}\frac{\log\left|\Gamma_{q}(2n+\gamma+1) \right| }{n\log n}=0.\]
Taking into account that
\[\lim_{n\rightarrow \infty}\frac{n-1}{\log n}=\infty,\]
we obtain
\[\lim_{n\rightarrow \infty}\frac{n\log n}{-\log\left| c_n\right| }=0,\]
which implies that \(\rho(\mathscr{E}_{\gamma,\sigma}(z^2;q))=0.\) Moreover, It is well known that the finite growth order $\rho$ of an entire function is not equal to a positive integer, then  the function has infinitely many zeros. That is to say, the function $\mathscr{E}_{\gamma,\sigma}(z^2;q)$ given in \eqref{qML} has infinitely zeros which are all real and simple. In this case, by virtue of the Hadamard theorem on growth order of the entire function, it follows that its infinite product representation is exactly what we have in \Cref{q-Mittag-LefflerLemma}. This means that the function $\mathscr{E}_{\gamma,\sigma}(z^2;q)$ belongs to the Laguerre-P\'olya class $\mathcal{LP}$ of entire functions. As a natural consequence of this, we deduce that the function $z\mapsto \Psi_{\gamma,\sigma}(z;q)$ belongs also to the Laguerre-P\'olya class $\mathcal{LP}.$ Since  $\mathcal{LP}$ is closed differentiation the function $z\mapsto \Psi_{\gamma,\sigma}^{\prime}(z;q)$ belongs also to the class $\mathcal{LP}.$ Hence the function $z\mapsto\Psi_{\gamma,\sigma}^{\prime}(z;q)$ has only real zeros under the same conditions. It is important to mention that throughout of this paper for the sake of simplicity, we use the notation $\lambda_{\gamma,\sigma}(z;q)=\mathscr{E}_{\gamma,\sigma}(z^2;q).$ Now, with the aid of the infinite product representation we get
\begin{equation}\label{MainLemmaEq1}
\frac{\Psi_{\nu,c}^{\prime}(z;q)}{\Psi_{\nu,c}(z;q)}=\frac{\gamma+1}{z}+\frac{\lambda_{\gamma,\sigma}'(z;q)}{\lambda_{\gamma,\sigma}(z;q)}=\frac{\gamma+1}{z}+\sum_{n\geq 1}\frac{2z}{z^{2}-\varepsilon_{\gamma,\sigma,n}^{2}(q)}.
\end{equation}
Differentiating both sides of Eq. \eqref{MainLemmaEq1}, we arrive at
\[\frac{d}{dz}\left( \frac{\Psi_{\nu,c}^{\prime}(z;q)}{\Psi_{\nu,c}(z;q)}\right)=-\frac{\gamma+1}{z^2}-2\sum_{n\geq 1}\frac{z^2+\varepsilon_{\gamma,\sigma,n}^{2}(q)}{\left( z^2-\varepsilon_{\gamma,\sigma,n}^{2}(q)\right)^2 }, \quad z\neq \varepsilon_{\gamma,\sigma,n}(q). \]
It is clear that the expression on the right-hand side is real and negative for the same assumptions of the lemma. That is to say, for each real $z,$ $\tfrac{\Psi_{\nu,c}^{\prime}(z;q)}{\Psi_{\nu,c}(z;q)}<0$ which implies that the quotient $\frac{\Psi_{\nu,c}^{\prime}}{\Psi_{\nu,c}}$ is a strictly decreasing function from $+\infty$ to $-\infty$ as $z$ increases through real values over the open interval $\left(\varepsilon_{\gamma,\sigma,n}(q), \varepsilon_{\gamma,\sigma,n+1}(q) \right),$ $n\in \mathbb{N}.$ Hence between any two zeros of the function $\lambda_{\gamma,\sigma}(z;q)$ there must be precisely one of $\Psi_{\gamma,\sigma}^{\prime}(z;q).$
\end{proof}
\subsection{Radii of starlikeness of the $q-$Mittag-Leffler functions} This section is devoted to determine the radii of starlikeness of the normalized forms of the $q-$Mittag-Leffler functions, that is of $f_{\gamma,\sigma}(z;q),$ $g_{\gamma,\sigma}(z;q)$ and $h_{\gamma,\sigma}(z;q).$ In addition, in this section we aim to give some tight lower and upper bounds for the radii of starlikeness and convexity of order zero for these functions.

\begin{theorem}\label{Theo1}
Let $\alpha \in \left[0,1 \right) $ and with the conditions of \Cref{q-Mittag-LefflerLemma} the following assertions hold true:
\begin{itemize}
	\item [\bf a.] The radius of starlikeness of order $\alpha$ of the function  $f_{\gamma,\sigma}$ is $r^{\star}_{\alpha}( f_{\gamma,\sigma}(z;q))=x_{\gamma,\sigma,1}(q)$, where $x_{\gamma,\sigma,1}(q)$ stands for the smallest positive zero of the equation
	\[r\lambda_{\gamma,\sigma}'(r;q)-(\gamma+1)(\alpha-1)\lambda_{\gamma,\sigma}(r;q)=0.\]
	\item [\bf b.] The radius of starlikeness of order $\alpha$ of the function  $g_{\gamma,\sigma}$ is $r^{\star}_{\alpha}( g_{\gamma,\sigma}(z;q))=y_{\gamma,\sigma,1}(q)$, where $y_{\gamma,\sigma,1}(q)$ stands for the smallest positive zero of the equation
	\[r\lambda_{\gamma,\sigma}'(r;q)-(\alpha-1)\lambda_{\gamma,\sigma}(r;q)=0.\]
	\item [\bf c.] The radius of starlikeness of order $\alpha$ of the function  $h_{\gamma,\sigma}$ is $r^{\star}_{\alpha}( h_{\gamma,\sigma}(z;q))=z_{\gamma,\sigma,1}(q)$, where $z_{\gamma,\sigma,1}$ stands for the smallest positive zero of the equation
	\[\sqrt{r}\lambda_{\gamma,\sigma}'(\sqrt{r};q)-2(\alpha-1)\lambda_{\gamma,\sigma}(\sqrt{r};q)=0.\]
\end{itemize}
\end{theorem}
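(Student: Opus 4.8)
The plan is to establish each of the three parts by the same strategy: reduce the starlikeness condition to a monotonicity statement about a real function built from the logarithmic derivative, and then locate the critical radius as the first zero of the relevant transcendental equation. I would begin with part \textbf{b.}, since $g_{\gamma,\sigma}$ is the cleanest case, and then adapt the argument to the other two. Writing $g(z)=g_{\gamma,\sigma}(z;q)=z\,\Gamma_q(\gamma+1)\mathscr{E}_{\gamma,\sigma}(z^2;q)=z\,\Gamma_q(\gamma+1)\lambda_{\gamma,\sigma}(z;q)$ and using the Weierstrassian product \eqref{InfiniteProductq-ML} from \Cref{q-Mittag-LefflerLemma}, I would compute
\begin{equation*}
\frac{zg'(z)}{g(z)}=1+\frac{z\lambda_{\gamma,\sigma}'(z;q)}{\lambda_{\gamma,\sigma}(z;q)}=1-\sum_{n\geq1}\frac{2z^2}{\varepsilon_{\gamma,\sigma,n}^2(q)-z^2}.
\end{equation*}

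First I would observe that, because all zeros $\varepsilon_{\gamma,\sigma,n}(q)$ are real (by \Cref{Lemma1} and \Cref{q-Mittag-LefflerLemma}), the standard minimum-principle argument of Baricz \emph{et al.} applies: for a function with real zeros one has the inequality $\real(z/(\varepsilon^2-z^2))\ge |z|/(\varepsilon^2-|z|^2)$ type bound, so that $\real\!\left(zg'(z)/g(z)\right)$ attains its infimum on $\mathbb{D}_r$ along the positive real axis at $z=r$. Concretely, the key sublemma is that for $|z|=r<\varepsilon_{\gamma,\sigma,1}(q)$,
\begin{equation*}
\real\!\left(\frac{zg'(z)}{g(z)}\right)\ge 1-\sum_{n\geq1}\frac{2r^2}{\varepsilon_{\gamma,\sigma,n}^2(q)-r^2}=\frac{rg'(r)}{g(r)},
\end{equation*}
which transfers the whole problem from the disk to the interval $(0,\varepsilon_{\gamma,\sigma,1}(q))$ on the real line. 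The inequality $\real(z/(\varepsilon^2-z^2))\ge r/(\varepsilon^2-r^2)$ is elementary for $r<\varepsilon$ and is where I would be careful, but it is routine once the zeros are known to be real.

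Next, on the real interval I would study $u(r)=rg'(r)/g(r)=1+r\lambda_{\gamma,\sigma}'(r;q)/\lambda_{\gamma,\sigma}(r;q)$. Differentiating the series term by term shows $u'(r)<0$ on $(0,\varepsilon_{\gamma,\sigma,1}(q))$, so $u$ is strictly decreasing from $u(0^+)=1$ toward $-\infty$ as $r\uparrow\varepsilon_{\gamma,\sigma,1}(q)$. Hence for fixed $\alpha\in[0,1)$ there is a unique $r=y_{\gamma,\sigma,1}(q)\in(0,\varepsilon_{\gamma,\sigma,1}(q))$ with $u(y_{\gamma,\sigma,1}(q))=\alpha$, and $\real(zg'(z)/g(z))>\alpha$ throughout $\mathbb{D}_r$ precisely when $r\le y_{\gamma,\sigma,1}(q)$; equivalently $y_{\gamma,\sigma,1}(q)$ is the smallest positive root of $r\lambda_{\gamma,\sigma}'(r;q)-(\alpha-1)\lambda_{\gamma,\sigma}(r;q)=0$, which is exactly assertion \textbf{b.}. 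For part \textbf{a.} the only change is that $f_{\gamma,\sigma}=(z^{\gamma+1}\Gamma_q(\gamma+1)\lambda_{\gamma,\sigma}(z;q))^{1/(\gamma+1)}$, so $zf'(z)/f(z)=1+\tfrac{1}{\gamma+1}\,z\lambda_{\gamma,\sigma}'(z;q)/\lambda_{\gamma,\sigma}(z;q)$; the factor $1/(\gamma+1)$ propagates through, yielding the equation $r\lambda_{\gamma,\sigma}'(r;q)-(\gamma+1)(\alpha-1)\lambda_{\gamma,\sigma}(r;q)=0$. For part \textbf{c.}, $h_{\gamma,\sigma}(z;q)=z\,\Gamma_q(\gamma+1)\mathscr{E}_{\gamma,\sigma}(z;q)$ uses $\mathscr{E}_{\gamma,\sigma}(z;q)=\lambda_{\gamma,\sigma}(\sqrt{z};q)$, so a substitution $z\mapsto\sqrt{z}$ introduces the half-integer scaling and the factor $2$ in $\sqrt{r}\lambda_{\gamma,\sigma}'(\sqrt{r};q)-2(\alpha-1)\lambda_{\gamma,\sigma}(\sqrt{r};q)=0$.

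The main obstacle I anticipate is justifying the disk-to-interval reduction rigorously, i.e.\ proving that the infimum of $\real(zg'(z)/g(z))$ over $|z|=r$ is attained at $z=r$ and equals the real value $u(r)$. This rests entirely on the reality of all the zeros $\varepsilon_{\gamma,\sigma,n}(q)$, which \Cref{q-Mittag-LefflerLemma} guarantees, together with the elementary but essential inequality $\real\bigl(z^2/(\varepsilon^2-z^2)\bigr)\le r^2/(\varepsilon^2-r^2)$ for $|z|=r<\varepsilon$; summing this over $n$ gives the bound. A secondary technical point is verifying that each candidate root lies in $(0,\varepsilon_{\gamma,\sigma,1}(q))$ so that the whole interval $\mathbb{D}_r$ stays inside the first zero and the product expansion is valid — but this follows automatically from the strict monotonicity of $u$ and the boundary behavior $u(0^+)=1>\alpha$, $u(\varepsilon_{\gamma,\sigma,1}(q)^-)=-\infty<\alpha$.
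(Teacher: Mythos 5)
Your proposal is correct and follows essentially the same route as the paper: logarithmic differentiation of the Weierstrassian product from \Cref{q-Mittag-LefflerLemma}, the inequality $\real\bigl(z^2/(\varepsilon^2-z^2)\bigr)\le |z|^2/(\varepsilon^2-|z|^2)$ (the paper's inequality \eqref{ClassicalIneq} from \cite{BS}) to reduce the problem to the positive real axis, and then identification of the radius as the first positive root of the resulting transcendental equation. Your explicit monotonicity argument for $u(r)$ on $(0,\varepsilon_{\gamma,\sigma,1}(q))$ is a slightly more detailed justification of the final step than the paper's appeal to the minimum principle, but it is the same argument in substance.
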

\begin{proof}
We need to show that the inequalities
\begin{equation}\label{Theo1Ineq1}
\real\left( \frac{zf_{\gamma,\sigma}'(z;q)}{f_{\gamma,\sigma}(z;q)}\right) \geq \alpha, \text{ \ \ }\real\left( \frac{zg_{\gamma,\sigma}'(z;q)}{g_{\gamma,\sigma}(z;q)}\right) \geq \alpha \text{ \ and \ } \real\left( \frac{zh_{\gamma,\sigma}'(z;q)}{h_{\gamma,\sigma}(z;q)}\right) \geq \alpha
\end{equation}
hold for $z\in \mathbb{D}_{r^{\star}_{\alpha}( f_{\gamma,\sigma})},$ $z\in \mathbb{D}_{r^{\star}_{\alpha}( g_{\gamma,\sigma})},$ and $z\in \mathbb{D}_{r^{\star}_{\alpha}( h_{\gamma,\sigma})},$ respectively, and each of the above inequalities does not hold in any larger disk. Recall that under the corresponding conditions the zeros of the $q-$Mittag-Leffler function $\mathscr{E}_{\gamma,\sigma}(z^2;q)$ are all real and simple. Hence in light of \Cref{q-Mittag-LefflerLemma} the $q-$Mittag-Leffler function has the infinite product representation given by
\[\mathscr{E}_{\gamma,\sigma}(z^2 ;q)=\frac{1}{\Gamma_{q}(\gamma+1)}\prod_{n\geq 1}\left(1-\frac{z^2}{\varepsilon_{\gamma,\sigma,n}^2(q)}\right)\] 
and this infinite product is uniformly convergent on each compact subset of $\mathbb{C}.$ Taking into account fact that we use the notation $\lambda_{\gamma,\sigma}(z;q)=\mathscr{E}_{\gamma,\sigma}(z^2 ;q)$, and by logarithmic differentiation we get
\[\frac{\lambda_{\gamma,\sigma}'(z;q)}{\lambda_{\gamma,\sigma}(z;q)}=-\sum_{n\geq 1}\frac{2z}{\varepsilon_{\gamma,\sigma,n}^2(q)-z^2},\]
which implies that
\[\frac{zf_{\gamma,\sigma}'(z;q)}{f_{\gamma,\sigma}(z;q)}=1-\frac{1}{\gamma+1}\sum_{n\geq 1}\frac{2z}{\varepsilon_{\gamma,\sigma,n}^2(q)-z^2}, \quad \frac{zg_{\gamma,\sigma}'(z;q)}{g_{\gamma,\sigma}(z;q)}=1-\sum_{n\geq 1}\frac{2z^2}{\varepsilon_{\gamma,\sigma,n}^2(q)-z^2}\] 
and
\[\frac{zh_{\gamma,\sigma}'(z;q)}{h_{\gamma,\sigma}(z;q)}=1-\sum_{n\geq 1}\frac{2z}{\varepsilon_{\gamma,\sigma,n}^2(q)-z}.\]
From \citen{BS}, we know that if $z\in \mathbb{C}$ and $\theta\in\mathbb{R}$ are such that $\theta>\left| z\right|,$ then
\begin{equation}\label{ClassicalIneq}
\frac{\left|z \right| }{\theta-\left| z\right| }\geq \real\left( \frac{z}{\theta}-z\right). 
\end{equation}
By virtue of the inequality \eqref{ClassicalIneq} we deduce that the inequality
\[\frac{\left| z\right| ^2}{\varepsilon_{\gamma,\sigma,n}^2(q)-\left| z\right| ^2}\geq \real\left(\frac{z^2}{\varepsilon_{\gamma,\sigma,n}^2(q)-z^2} \right) \]
holds under the conditions of \Cref{q-Mittag-LefflerLemma}, $n\in\mathbb{N}$ and $\left|z \right|<\varepsilon_{\gamma,\sigma,1}(q), $ and therefore under the same conditions we get
\begin{align*}
\real\left( \frac{zf_{\gamma,\sigma}(z;q)}{f_{\gamma,\sigma}(z;q)}\right)&=1-\frac{1}{\gamma+1}\real\left( \sum_{n\geq 1}\frac{2z^2}{\varepsilon_{\gamma,\sigma,n}^2(q)-z^2} \right) \\
&\geq1-\frac{1}{\gamma+1}\sum_{n\geq1}\frac{2\left|z\right|^2}{\varepsilon_{\gamma,\sigma,n}^2(q)-\left| z\right| ^2}=\frac{\left| z\right| f_{\gamma,\sigma}(\left| z\right| ;q)}{f_{\gamma,\sigma}(\left| z\right| ;q)},\\
\real\left( \frac{zg_{\gamma,\sigma}(z;q)}{g_{\gamma,\sigma}(z;q)}\right)&= 1-\real\left( \sum_{n\geq 1}\frac{2z^2}{\varepsilon_{\gamma,\sigma,n}^2(q)-z^2} \right) \geq 1-\sum_{n\geq 1}\frac{2\left| z\right| ^2}{\varepsilon_{\gamma,\sigma,n}^2(q)-\left| z\right| ^2}=\frac{\left| z\right| g_{\gamma,\sigma}(\left| z\right| ;q)}{g_{\gamma,\sigma}(\left| z\right| ;q)},\\
\real\left( \frac{zh_{\gamma,\sigma}(z;q)}{h_{\gamma,\sigma}(z;q)}\right)&=1-\real\left( \sum_{n\geq 1}\frac{z}{\varepsilon_{\gamma,\sigma,n}^2(q)-z} \right) \geq 1-\sum_{n\geq 1}\frac{\left| z\right|}{\varepsilon_{\gamma,\sigma,n}^2(q)-\left| z\right|}=\frac{\left| z\right| h_{\gamma,\sigma}(\left| z\right| ;q)}{h_{\gamma,\sigma}(\left| z\right| ;q)}
\end{align*}
where equalities occur only when $z=\left| z\right|=r.$ The minimum principle for harmonic functions and the previous inequalities imply that the corresponding inequalities given in \eqref{Theo1Ineq1} are valid if and only if we have $\left| z\right| <x_{\gamma,\sigma,1}(q),$ $\left| z\right| <y_{\gamma,\sigma,1}(q)$ and $\left| z\right| <z_{\gamma,\sigma,1}(q),$ respectively, where $x_{\gamma,\sigma,1}(q),$ $y_{\gamma,\sigma,1}(q)$ and $z_{\gamma,\sigma,1}(q)$ are the smallest positive roots of the following equalities
\[\real\left( \frac{rf_{\gamma,\sigma}'(r;q)}{f_{\gamma,\sigma}(r;q)}\right)= \alpha, \text{ \ \ }\real\left( \frac{rg_{\gamma,\sigma}'(r;q)}{g_{\gamma,\sigma}(r;q)}\right)= \alpha \text{ \ and \ } \real\left( \frac{rh_{\gamma,\sigma}'(r;q)}{h_{\gamma,\sigma}(r;q)}\right)=\alpha\]
which imlies that
\[r\lambda_{\gamma,\sigma}'(r;q)-(\gamma+1)(\alpha-1)\lambda_{\gamma,\sigma}(r;q)=0, \quad r\lambda_{\gamma,\sigma}'(r;q)-(\alpha-1)\lambda_{\gamma,\sigma}(r;q)=0\]
and
\[\sqrt{r}\lambda_{\gamma,\sigma}'(\sqrt{r};q)-2(\alpha-1)\lambda_{\gamma,\sigma}(\sqrt{r};q)=0.\]
\end{proof}
The following theorem gives some tight lower and upper bounds for the radii of starlikeness of the functions seen on the above theorem. 
\begin{theorem}\label{Starlikenessq-ML}
Let the conditions of \Cref{q-Mittag-LefflerLemma} remain valid.
\begin{itemize}
	\item [\bf a.] The radius of starlikeness $r^{\star}(f_{\gamma,\sigma}(z;q))$ satisfies the inequalities
	\[\text{\footnotesize $\frac{\sigma^2(\gamma+3)\Gamma_{q}(\gamma+1)}{(\gamma+1)\Gamma_{q}(\gamma+3)}-\frac{2\sigma^2q^2(\gamma+5)\Gamma_{q}(\gamma+3)}{(\gamma+3)\Gamma_{q}(\gamma+5)}<\left(r^{\star}(f_{\gamma,\sigma}(z;q))\right) ^{-2}<\frac{\sigma^2(\gamma+3)\Gamma_{q}(\gamma+1)}{(\gamma+1)\Gamma_{q}(\gamma+3)} $}.\]
	\item [\bf b.] The radius of starlikeness $r^{\star}(g_{\gamma,\sigma}(z;q))$ satisfies the inequalities
	\[ \text{\footnotesize $\frac{\Gamma_{q}(\gamma+3)}{3\sigma^2\Gamma_{q}(\gamma+1)}<\left( r^{\star}(g_{\gamma,\sigma}(z;q))\right)^{2}<\frac{3\Gamma_{q}(\gamma+3)\Gamma_{q}(\gamma+5)}{\sigma^{2}\left(9\Gamma_{q}(\gamma+1)\Gamma_{q}(\gamma+5)-10q^{2}\Gamma_{q}^{2}(\gamma+3)\right) }. $} \]
	\item [\bf c.] The radius of starlikeness $r^{\star}(h_{\gamma,\sigma}(z;q))$ satisfies the inequalities
	\[\frac{\Gamma_{q}(\gamma+3)}{2\sigma^2\Gamma_{q}(\gamma+1)}<r^{\star}(h_{\gamma,\sigma}(z;q))<\frac{\Gamma_{q}(\gamma+3)\Gamma_{q}(\gamma+5)}{\sigma^2\left(2\Gamma_{q}(\gamma+1)\Gamma_{q}(\gamma+5)-3q^2\Gamma_{q}^2(\gamma+3)\right) }.\]
\end{itemize}
\end{theorem}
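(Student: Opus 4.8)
The plan is to reduce each radius to the first positive zero of a single explicit entire function and then apply the Euler--Rayleigh inequalities at the first step ($k=1$). Setting $\alpha=0$ in \Cref{Theo1}, the radius $r^{\star}(f_{\gamma,\sigma})$ is the smallest positive root of $r\lambda_{\gamma,\sigma}'(r;q)+(\gamma+1)\lambda_{\gamma,\sigma}(r;q)=0$, while $r^{\star}(g_{\gamma,\sigma})$ and $r^{\star}(h_{\gamma,\sigma})$ are governed by the analogous equations. First I would rewrite each defining equation as the vanishing of a power series. Writing $\lambda_{\gamma,\sigma}(z;q)=\sum_{n\geq0}(-1)^n b_n z^{2n}$ with $b_n=\sigma^{2n}q^{n(n-1)}/\Gamma_{q}(2n+\gamma+1)$, term-by-term differentiation yields
\begin{align*}
(\gamma+1)\lambda_{\gamma,\sigma}(z;q)+z\lambda_{\gamma,\sigma}'(z;q)&=\sum_{n\geq0}(-1)^n(2n+\gamma+1)b_n z^{2n},\\
\lambda_{\gamma,\sigma}(z;q)+z\lambda_{\gamma,\sigma}'(z;q)&=\sum_{n\geq0}(-1)^n(2n+1)b_n z^{2n},
\end{align*}
and, since $h_{\gamma,\sigma}$ is built from $\mathscr{E}_{\gamma,\sigma}(z;q)$ rather than $\mathscr{E}_{\gamma,\sigma}(z^2;q)$, the relevant series for $h$ is $\sum_{n\geq0}(-1)^n(n+1)b_n z^{n}$. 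Hence $(r^{\star}(f_{\gamma,\sigma}))^2$ and $(r^{\star}(g_{\gamma,\sigma}))^2$ are the first positive zeros (in $t=z^2$) of the first two series, whereas $r^{\star}(h_{\gamma,\sigma})$ is the first positive zero (in $z$) of the third.

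Next I would justify that each such series has only positive real zeros, so that a Hadamard factorization and the Rayleigh--sum calculus are legitimate. This is exactly where \Cref{q-Mittag-LefflerLemma} is invoked: since $\lambda_{\gamma,\sigma}$, and therefore $\Psi_{\gamma,\sigma}$, lie in the Laguerre--P\'olya class and $\mathcal{LP}$ is closed under differentiation, the functions $\Psi_{\gamma,\sigma}'$, $g_{\gamma,\sigma}'$ and $h_{\gamma,\sigma}'$ have only real zeros, which become positive after the substitution $t=z^2$ (for $f,g$) or are already nonnegative (for $h$). Denoting the generic series by $\sum_{n\geq0}(-1)^n c_n t^n$ and dividing by its constant term $c_0$, I obtain a monic product $\prod_{k\geq1}(1-t/\rho_k)=1-\kappa_1 t+\kappa_2 t^2-\cdots$ with $\kappa_n=c_n/c_0$, and then compute the first two Rayleigh sums $\delta_m=\sum_{k}\rho_k^{-m}$ by Newton's identities, $\delta_1=\kappa_1$ and $\delta_2=\kappa_1^{2}-2\kappa_2$.

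Finally the Euler--Rayleigh inequalities $\delta_1^{-1}<\rho_1<\delta_1/\delta_2$ deliver the stated estimates once the two $q$-Gamma ratios
\[a_1=\frac{\sigma^2\Gamma_{q}(\gamma+1)}{\Gamma_{q}(\gamma+3)},\qquad a_2=\frac{\sigma^4 q^{2}\Gamma_{q}(\gamma+1)}{\Gamma_{q}(\gamma+5)}\]
are inserted. Concretely, for $g_{\gamma,\sigma}$ one has $\kappa_1=3a_1$ and $\kappa_2=5a_2$, so $\delta_1^{-1}=\Gamma_{q}(\gamma+3)/(3\sigma^2\Gamma_{q}(\gamma+1))$ and $\delta_1/\delta_2=3a_1/(9a_1^{2}-10a_2)$, which simplify to the bounds of part (b); for $h_{\gamma,\sigma}$ one has $\kappa_1=2a_1$, $\kappa_2=3a_2$, giving part (c); and for $f_{\gamma,\sigma}$ one has $\kappa_1=(\gamma+3)a_1/(\gamma+1)$, $\kappa_2=(\gamma+5)a_2/(\gamma+1)$, after which the bounds on $\rho_1=(r^{\star})^2$ are inverted to bounds on $(r^{\star})^{-2}$ to produce part (a). The main obstacle is not the algebraic simplification but the structural step of the second paragraph: one must ensure that differentiation and the rescaling $t=z^2$ preserve reality and simplicity of the zeros, so that the Rayleigh sums converge and the product expansion is valid. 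This is precisely what the Laguerre--P\'olya membership and the interlacing recorded in \Cref{q-Mittag-LefflerLemma} supply.
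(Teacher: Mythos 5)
Your proposal is correct and follows essentially the same route as the paper: for each normalization you reduce the radius (via the $\alpha=0$ case of the preceding theorem) to the first positive zero of an explicit entire function of Laguerre--P\'olya type, factor it as a Hadamard product, extract the first two Rayleigh sums from the coefficients, and apply the Euler--Rayleigh inequalities at $k=1$; your Newton's-identities computation of $\delta_1,\delta_2$ is just the coefficient-matching of logarithmic derivatives that the paper carries out, and all of your resulting $q$-Gamma expressions agree with the paper's $\kappa_i$, $\chi_i$, $\delta_i$.
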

\begin{proof}
\begin{itemize}
	\item [\bf a.] The radius of starlikeness of the normalized $q-$Mittag-Leffler function $f_{\gamma,\sigma}(z;q)$ coincide with the radius of starlikeness of the function $\Psi_{\gamma,\sigma}(z;q)=z^{\gamma+1}\lambda_{\gamma,\sigma}(z;q).$ The infinite series
	representation of the function $\Psi_{\gamma,\sigma}'(z;q)$ and its derivative are as follows:
	\begin{equation}\label{Psi1}
	\Psi_{\gamma,\sigma}'(z;q)=\sum_{n\geq 0}\frac{(-1)^{n}\sigma^{2n}(2n+\gamma+1)q^{n(n-1)}}{\Gamma_{q}(2n+\gamma+1)}z^{2n+\gamma}
	\end{equation}
	and
	\begin{equation}\label{Psi2}
	\Psi_{\gamma,\sigma}''(z;q)=\sum_{n\geq 0}\frac{(-1)^n\sigma^{2n}(2n+\gamma+1)(2n+\gamma)q^{n(n-1)}}{\Gamma_{q}(2n+\gamma+1)}z^{2n+\gamma-1}.
	\end{equation}
	By means of \Cref{q-Mittag-LefflerLemma} the function $z\mapsto\Psi_{\gamma,\sigma}(z;q)$ belongs to the Laguerre-P\'olya class $\mathcal{LP}.$ Because of the fact that this class of functions is closed under differentiation, $z\mapsto\Psi_{\gamma,\sigma}'(z;q)$ belong also to the Laguerre-P\'olya class $\mathcal{LP}.$ This means that the zeros of the function $z\mapsto\Psi_{\gamma,\sigma}'(z;q)$ are all real, and in fact due to \Cref{q-Mittag-LefflerLemma} they are interlaced with the zeros of $z\mapsto\Psi_{\gamma,\sigma}(z;q).$ Therefore, $\Psi_{\gamma,\sigma}'(z;q)$ can be represented by the infinite product form
	\begin{equation}\label{Theo2Eq1}
	\Psi_{\gamma,\sigma}'(z;q)=\frac{(\gamma+1)z^{\gamma}}{\Gamma_{q}(\gamma+1)}\prod_{n\geq 1}\left(1-\frac{z^2}{\xi_{\gamma,\sigma,n}^{2}(q)} \right).
	\end{equation}
	If we take the logarithmic derivative of both sides of \eqref{Theo2Eq1} for $\left|z \right|<\xi_{\gamma,\sigma,1}$ we obtain
	\begin{align}\label{Compere1}
	\frac{z\Psi_{\gamma,\sigma}''(z;q)}{\Psi_{\gamma,\sigma}'(z;q)}-\gamma&=-\sum_{n\geq 1}\frac{2z^2}{\xi_{\gamma,\sigma,n}^{2}(q)-z^2}=-2\sum_{n\geq 1}\sum_{k\geq 0}\frac{z^{2k+2}}{\xi_{\gamma,\sigma,n}^{2k+2}(q)} \nonumber \\
	&=-2\sum_{k\geq 0}\sum_{n\geq 1}\frac{z^{2k+2}}{\xi_{\gamma,\sigma,n}^{2k+2}(q)}=-2\sum_{k\geq 0}\kappa_{k+1}z^{2k+2}
	\end{align}
	where $\kappa_{k}=\sum_{n\geq 1}\xi_{\gamma,\sigma,n}^{-2k}(q).$ On the other hand, by making use of \eqref{Psi1} and \eqref{Psi2} we obtain
	\begin{equation}\label{Compere2}
	\frac{z\Psi_{\gamma,\sigma}''(z;q)}{\Psi_{\gamma,\sigma}'(z;q)}=\sum_{n\geq 0} a_{n}z^{2n}\bigg/ \sum_{n\geq 0}b_{n}z^{2n},
	\end{equation}
	where
	\[a_n=\frac{(-1)^n\sigma^{2n}(2n+\gamma+1)(2n+\gamma)q^{n(n-1)}}{\Gamma_{q}(2n+\gamma+1)} \text{ \ and \ } b_n=\frac{(-1)^{n}\sigma^{2n}(2n+\gamma+1)q^{n(n-1)}}{\Gamma_{q}(2n+\gamma+1)}.\]
	By comparing the coefficients of \eqref{Compere1} and \eqref{Compere2} we get
	\[a_0=\gamma b_{0}, \text{ \ \ }a_1=\gamma b_{1}-2b_{0}\kappa_{1}, \text{ \ \ } a_2=\gamma b_{2}-2b_{1}\kappa_{1}-2b_{0}\kappa_{2},\]
	which gives the following Rayleigh sums
	\[\kappa_{1}=\frac{\sigma^{2}(\gamma+3)\Gamma_{q}(\gamma+1)}{(\gamma+1)\Gamma_{q}(\gamma+3)} \text{ \ and \ } \kappa_{2}=\frac{\sigma^{4}(\gamma+3)^{2}\Gamma_{q}^{2}(\gamma+1)}{(\gamma+1)^{2}\Gamma_{q}^{2}(\gamma+3)}-\frac{2\sigma^{4}q^{2}(\gamma+5)\Gamma_{q}(\gamma+1)}{(\gamma+1)\Gamma_{q}(\gamma+5)}.\]
	By using the Euler-Rayleigh inequalities	\[\kappa_{k}^{\frac{-1}{k}}<\xi_{\gamma,\sigma,1}^2<\frac{\kappa_{k}}{\kappa_{k+1}}\]
	for $k=1$ we have
	\[\frac{\sigma^2(\gamma+3)\Gamma_{q}(\gamma+1)}{(\gamma+1)\Gamma_{q}(\gamma+3)}-\frac{2\sigma^2q^2(\gamma+5)\Gamma_{q}(\gamma+3)}{(\gamma+3)\Gamma_{q}(\gamma+5)}<\left(r^{\star}(f_{\gamma,\sigma}(z;q))\right) ^{-2}<\frac{\sigma^2(\gamma+3)\Gamma_{q}(\gamma+1)}{(\gamma+1)\Gamma_{q}(\gamma+3)},\]
	which is desired result.

	\item [\bf b.] If we take $\alpha=0$ in the second part of the \Cref{Starlikenessq-ML}, then we have that the radius of starlikeness of order zero	of the function $g_{\gamma,\sigma}(z;q)$ is the smallest positive root of the equation $(z\lambda_{\gamma,\sigma}(z;q))'=0.$ Therefore, we shall study the first positive zero of
	\begin{equation}\label{Theo2Partb1}
	\phi_{\gamma,\sigma}(z;q)=(z\lambda_{\gamma,\sigma}(z;q))'=\sum_{n\geq 0}\frac{(-1)^{n}\sigma^{2n}(2n+1)q^{n(n-1)}}{\Gamma_{q}(2n+\gamma+1)}z^{2n}.
	\end{equation}
	We know that the function $z\mapsto \lambda_{\gamma,\sigma}(z;q)$ belongs to the Laguerre-P\'olya class $\mathcal{LP},$ which is closed under differentiation. Therefore, we get that the function $z\mapsto\phi_{\gamma,\sigma}(z;q)$ belongs to the Laguerre-P\'olya class, and hence all its zeros are real. Let denote $\theta_{\gamma,\sigma,n}(q)$ the $n$th positive zero of $z\mapsto\phi_{\gamma,\sigma}(z;q).$ Since growth order of the function $z\mapsto\phi_{\gamma,\sigma}(z;q)$ coincides with the growth order of the $q-$Mittag-Leffler function itself, it can be written as
	\begin{equation}\label{Theo2PhiProd}
	\phi_{\gamma,\sigma}(z;q)=\frac{1}{\Gamma_{q}(\gamma+1)}\prod_{n\geq 1}\left( 1-\frac{z^2}{\theta_{\gamma,\sigma,n}^2(q)}\right).
	\end{equation}
	Logarithmic differentiation of both sides of \eqref{Theo2PhiProd} for $\left| z\right| <\theta_{\gamma,\sigma,1}(q)$ gives
	\begin{equation}\label{The2ParbCoeff1}
	\text{\footnotesize $ \frac{\phi_{\gamma,\sigma}'(z;q)}{\phi_{\gamma,\sigma}(z;q)}=\sum_{n\geq 1}\frac{-2z}{\theta_{\gamma,\sigma,n}^2(q)-z^2}=\sum_{n\geq 1}\sum_{k\geq 0}\frac{-2z^{2k+1}}{\theta_{\gamma,\sigma,n}^{2k+2}(q)}=\sum_{k\geq 0}\sum_{n\geq 1}\frac{-2z^{2k+1}}{\theta_{\gamma,\sigma,n}^{2k+2}(q)}=-2\sum_{k\geq 0}\chi_{k+1}z^{2k+1},$}	
	\end{equation}
	where $\chi_{k}=\sum_{n\geq 1}\theta_{\gamma,\sigma,n}^{-2k}(q).$ Moreover, with the aid of \eqref{Theo2Partb1} we get
	\begin{equation}\label{The2ParbCoeff2}
	\frac{\phi_{\gamma,\sigma}'(z;q)}{\phi_{\gamma,\sigma}(z;q)}=-2\sum_{n\geq 0}c_nz^{2n+1} \bigg/ \sum_{n\geq 0}d_nz^{2n},
	\end{equation}
	where
	\[c_n= \frac{(-1)^{n}\sigma^{2n+2}(n+1)(2n+3)q^{n(n+1)}}{\Gamma_{q}(2n+\gamma+3)}\text{ \ and \ } d_n=\frac{(-1)^{n}\sigma^{2n}(2n+1)q^{n(n-1)}}{\Gamma_{q}(2n+\gamma+1)}.  \]
	Comparing the coefficients in \eqref{The2ParbCoeff1} and \eqref{The2ParbCoeff2} we have that 
	\[d_{0}\chi_{1}=c_{0}\text{ \ and \ } d_{0}\chi_{2}+d_{1}\chi_{1}=c_{1},\]
	which yields the following Rayleigh sums
	\[\chi_{1}=\frac{3\sigma^{2}\Gamma_{q}(\gamma+1)}{\Gamma_{q}(\gamma+3)} \text{ \ and \ } \chi_{2}=\frac{9\sigma^{4}\Gamma_{q}^2(\gamma+1)}{\Gamma_{q}^2(\gamma+3)}-\frac{10\sigma^{4}q^{2}\Gamma_{q}(\gamma+1)}{\Gamma_{q}(\gamma+5)}.\]
	By making use of the Euler-Rayleigh inequalities
	\[\chi_{k}^{-\frac{1}{k}}<\theta_{\gamma,\sigma,1}^{2}(q)<\frac{\chi_{k}}{\chi_{k+1}}\]
	for $k=1$ we obtain 
	\[\frac{\Gamma_{q}(\gamma+3)}{3\sigma^2\Gamma_{q}(\gamma+1)}<\left( r^{\star}(g_{\gamma,\sigma}(z;q))\right)^{2}<\frac{3\Gamma_{q}(\gamma+3)\Gamma_{q}(\gamma+5)}{\sigma^{2}\left(9\Gamma_{q}(\gamma+1)\Gamma_{q}(\gamma+5)-10q^{2}\Gamma_{q}^{2}(\gamma+3)\right) },\]
	which is desired result.
	\item [\bf c.] If we take $\alpha=0$ in the thirth part of the \Cref{Starlikenessq-ML} we obtain that the radius of starlikeness of order zero of the function $h_{\gamma,\sigma}$ is the smallest positive root of the equation $(z\lambda_{\gamma,\sigma}(\sqrt{z};q))'=0.$ Therefore, we shall focus on the first positive zero of
	\begin{equation}\label{Theo2Partc}
    \varphi_{\gamma,\sigma}(z;q)=(z\lambda_{\gamma,\sigma}(\sqrt{z};q))'=\sum_{n\geq 0}\frac{(-1)^{n}\sigma^{2n}(n+1)q^{n(n-1)}}{\Gamma_{q}(2n+\gamma+1)}z^{n}.
	\end{equation}
	We know that the function $z\mapsto\lambda_{\gamma,\sigma}(z;q)$ belongs to the Laguerre-P\'olya class $\mathcal{LP},$ and cosequently we conclude that $z\mapsto \varphi_{\gamma,\sigma}(z;q)$ belongs also to the class Laguerre-P\'olya class. This means that the zeros of the function $z\mapsto \varphi_{\gamma,\sigma}(z;q)$ are all real. Suppose that $\varsigma_{\gamma,\sigma,n}(q)$ is the $n$th positive zero of the function $z\mapsto \varphi_{\gamma,\sigma}(z;q).$ Then infinite product representation of the function $z\mapsto \varphi_{\gamma,\sigma}(z;q)$ can be written as
	\begin{equation}\label{Theo2h2}
	\varphi_{\gamma,\sigma}(z;q)=\frac{1}{\Gamma_{q}(\gamma+1)}\prod_{n\geq 1}\left(1-\frac{z}{\varsigma_{\gamma,\sigma,n}(q)} \right) 
	\end{equation}
	Logarithmic differentiation of both sides of \eqref{Theo2h2} for $\left|z\right|<\varsigma_{\gamma,\sigma,1}(q)$ yields
	\begin{equation}\label{Theo2PartcCom1}
	\frac{\varphi_{\gamma,\sigma}'(z;q)}{\varphi_{\gamma,\sigma}(z;q)}=-\sum_{n\geq 1}\frac{1}{\varsigma_{\gamma,\sigma,n}(q)-z}=-\sum_{n\geq 1}\sum_{k\geq 0}\frac{z^k}{\varsigma_{\gamma,\sigma,n}^{k+1}(q)}=-\sum_{k\geq 0}\sum_{n\geq 1}\frac{z^k}{\varsigma_{\gamma,\sigma,n}^{k+1}(q)}=-\sum_{k\geq 0}\delta_{k+1}z^k,
	\end{equation}
	where $\delta_{k}=\sum_{n\geq 1}\varsigma_{\gamma,\sigma,n}^{-k}(q).$ On the other hand, with the aid of \eqref{Theo2Partc} we obtain
	\begin{equation}\label{Theo2PartcCom2}
	\frac{\varphi_{\gamma,\sigma}'(z;q)}{\varphi_{\gamma,\sigma}(z;q)}=-\sum_{n\geq 0}u_nz^n \bigg/\sum_{n\geq 0}v_nz^n
	\end{equation}
	where
	\[u_n=\frac{(-1)^n\sigma^{2n+2}(n+1)(n+2)q^{n(n+1)}}{\Gamma_{q}(2n+\gamma+3)} \text{ \ and \ } v_n=\frac{(-1)^n\sigma^{2n}(n+1)q^{n(n-1)}}{\Gamma_{q}(2n+\gamma+3)}.\]
	By comparing the coefficients of \eqref{Theo2PartcCom1} and \eqref{Theo2PartcCom2}, we arrive at 
	\[u_0=v_0\delta_1 \text{ \ and \ } u_1=v_0\delta_2+v_1\delta_1\]
	which give the following Rayleigh sums
	\[\delta_1=\frac{2\sigma^2\Gamma_{q}(\gamma+1)}{\Gamma_{q}(\gamma+3)} \text{ \ and \ } \delta_2=\frac{4\sigma^4\Gamma_{q}^2(\gamma+1)}{\Gamma_{q}^2(\gamma+3)}-\frac{6q^2\sigma^4\Gamma_{q}(\gamma+1)}{\Gamma_{q}(\gamma+5)}.\]
	By using the Euler-Rayleigh inequalities
	\[ \delta_{k}^{-\frac{1}{k}}< \varsigma_{\gamma,\sigma,1}(q) <\frac{\delta_{k}}{\delta_{k+1}}\]
	for $k=1$ we obtain
	\[\frac{\Gamma_{q}(\gamma+3)}{2\sigma^2\Gamma_{q}(\gamma+1)}<r^{\star}(h_{\gamma,\sigma}(z;q))<\frac{\Gamma_{q}(\gamma+3)\Gamma_{q}(\gamma+5)}{\sigma^2\left(2\Gamma_{q}(\gamma+1)\Gamma_{q}(\gamma+5)-3q^2\Gamma_{q}^2(\gamma+3)\right) }.\]
\end{itemize}
\end{proof}

\subsection{Radii of convexity of the $q-$Mittag-Leffler functions} This section is devoted to determine the radii of convexity of order $\alpha$ of the functions $f_{\gamma,\sigma}(z;q),$ $g_{\gamma,\sigma}(z;q)$ and $h_{\gamma,\sigma}(z;q).$ In addition, we find tight lower and upper bounds for the radii of convexity of order zero for the functions $f_{\gamma,\sigma}(z;q),$ $g_{\gamma,\sigma}(z;q)$ and $h_{\gamma,\sigma}(z;q).$

\begin{theorem}\label{Theo3}
Let $\alpha \in \left[0,1 \right) $ and with the conditions of \Cref{q-Mittag-LefflerLemma} the following assrtions are valid:
\begin{itemize}
	\item [\bf a.] The radius of convexity $r^{c}_{\alpha}\left(f_{\gamma,\sigma}(z;q) \right)$ is the smallest positive root of the transcendental equation
	\[\left( rf_{\gamma,\sigma}(z;q)\right)'=\alpha f_{\gamma,\sigma}'(z;q). \]
	\item [\bf b.] The radius of convexity $r^{c}_{\alpha}\left(g_{\gamma,\sigma}(z;q) \right)$ is the smallest positive root of the transcendental equation
	\[\left( rg_{\gamma,\sigma}(z;q)\right)'=\alpha g_{\gamma,\sigma}'(z;q). \]
	\item [\bf c.] The radius of convexity $r^{c}_{\alpha}\left(h_{\gamma,\sigma}(z;q) \right)$ is the smallest positive root of the transcendental equation
	\[\left( rg_{\gamma,\sigma}(z;q)\right)'=\alpha g_{\gamma,\sigma}'(z;q). \]
\end{itemize}
\end{theorem}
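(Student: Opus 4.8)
The plan is to mirror the proof of \Cref{Theo1}, replacing the starlikeness quotient $zf'/f$ by the convexity quotient $1+zf''/f'$ and using the analytic characterization that $f$ is convex of order $\alpha$ in $\mathbb{D}_r$ precisely when $\real\left(1+zf''/f'\right)>\alpha$ there (equivalently, that $zf'$ is starlike of order $\alpha$). Writing $c=\Gamma_q(\gamma+1)$, the first observation is that the two simpler normalizations linearize: since $g_{\gamma,\sigma}'(z;q)=c\,\phi_{\gamma,\sigma}(z;q)$ and $h_{\gamma,\sigma}'(z;q)=c\,\varphi_{\gamma,\sigma}(z;q)$, with $\phi_{\gamma,\sigma}$ and $\varphi_{\gamma,\sigma}$ the entire functions of \eqref{Theo2Partb1} and \eqref{Theo2Partc}, one immediately gets
\[1+\frac{zg_{\gamma,\sigma}''(z;q)}{g_{\gamma,\sigma}'(z;q)}=1+\frac{z\phi_{\gamma,\sigma}'(z;q)}{\phi_{\gamma,\sigma}(z;q)},\qquad 1+\frac{zh_{\gamma,\sigma}''(z;q)}{h_{\gamma,\sigma}'(z;q)}=1+\frac{z\varphi_{\gamma,\sigma}'(z;q)}{\varphi_{\gamma,\sigma}(z;q)}.\]

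For parts \textbf{b} and \textbf{c} I would insert the product representations \eqref{Theo2PhiProd} and \eqref{Theo2h2}; logarithmic differentiation converts the right-hand sides into the single sums $1-\sum_{n\ge1}2z^2/(\theta_{\gamma,\sigma,n}^2(q)-z^2)$ and $1-\sum_{n\ge1}z/(\varsigma_{\gamma,\sigma,n}(q)-z)$. Each carries only a negative sum, so \eqref{ClassicalIneq} applies termwise exactly as in \Cref{Theo1}: the real part of every summand is bounded below by its value at $z=r=|z|$, the minimum principle for harmonic functions pushes the infimum onto the positive real axis, and the radius is the smallest positive root of $1+rg_{\gamma,\sigma}''(r;q)/g_{\gamma,\sigma}'(r;q)=\alpha$ (respectively for $h_{\gamma,\sigma}$), which is the asserted transcendental equation. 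One must also check that these roots lie below $\theta_{\gamma,\sigma,1}(q)$, respectively $\varsigma_{\gamma,\sigma,1}(q)$, so that the product expansions remain valid on the whole disk.

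The delicate case is $f_{\gamma,\sigma}$. From $f_{\gamma,\sigma}=(c\,\Psi_{\gamma,\sigma})^{1/(\gamma+1)}$ one has $f_{\gamma,\sigma}'/f_{\gamma,\sigma}=1/z+(\gamma+1)^{-1}\lambda_{\gamma,\sigma}'/\lambda_{\gamma,\sigma}$, and after a second logarithmic differentiation, inserting the factorization \eqref{InfiniteProductq-ML} of $\lambda_{\gamma,\sigma}$ and the factorization \eqref{Theo2Eq1} of $\Psi_{\gamma,\sigma}'$, I expect to arrive at
\[1+\frac{zf_{\gamma,\sigma}''(z;q)}{f_{\gamma,\sigma}'(z;q)}=1+\frac{\gamma}{\gamma+1}\sum_{n\ge1}\frac{2z^2}{\varepsilon_{\gamma,\sigma,n}^2(q)-z^2}-\sum_{n\ge1}\frac{2z^2}{\xi_{\gamma,\sigma,n}^2(q)-z^2}.\]
Here the two families of terms carry opposite signs, so \eqref{ClassicalIneq} cannot be used term by term; this is the crux of the whole theorem.

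The main obstacle is therefore to show that the real part above is still minimized on the positive real axis despite the positive middle sum. The remedy is the interlacing $\xi_{\gamma,\sigma,n}(q)<\varepsilon_{\gamma,\sigma,n}(q)$ guaranteed by \Cref{q-Mittag-LefflerLemma}, together with $\gamma/(\gamma+1)<1$. I would establish the per-index inequality
\[\real\left(\frac{2z^2}{\xi_{\gamma,\sigma,n}^2(q)-z^2}-\frac{\gamma}{\gamma+1}\frac{2z^2}{\varepsilon_{\gamma,\sigma,n}^2(q)-z^2}\right)\le \frac{2r^2}{\xi_{\gamma,\sigma,n}^2(q)-r^2}-\frac{\gamma}{\gamma+1}\frac{2r^2}{\varepsilon_{\gamma,\sigma,n}^2(q)-r^2}\]
for $r=|z|<\xi_{\gamma,\sigma,1}(q)$ by substituting $w=z^2$, expressing each real part as a function of $\real(w)\in[-r^2,r^2]$ that is increasing (the monotonicity coming from $\xi_{\gamma,\sigma,n}(q),\varepsilon_{\gamma,\sigma,n}(q)>r$), and checking that the weighted difference is still maximized at $\real(w)=r^2$; this last step is exactly where the interlacing $\xi_{\gamma,\sigma,n}(q)<\varepsilon_{\gamma,\sigma,n}(q)$ and the bound $\gamma/(\gamma+1)<1$ are consumed. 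Summing over $n$ and invoking the minimum principle then identifies $r^c_\alpha(f_{\gamma,\sigma})$ with the smallest positive root of $1+rf_{\gamma,\sigma}''(r;q)/f_{\gamma,\sigma}'(r;q)=\alpha$. Finally, that convexity of order $\alpha$ cannot persist on any larger disk follows, as in \Cref{Theo1}, from the fact that along the positive real axis the quotient is real and strictly decreasing, so it crosses the value $\alpha$ exactly at that first root.
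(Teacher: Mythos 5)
Your proposal reproduces the paper's argument almost exactly: parts \textbf{b} and \textbf{c} via the Hadamard products of $g_{\gamma,\sigma}'$ and $h_{\gamma,\sigma}'$ from \eqref{Theo2PhiProd} and \eqref{Theo2h2}, the termwise bound \eqref{ClassicalIneq}, the minimum principle, and strict monotonicity of the resulting function of $r$ on $(0,\theta_{\gamma,\sigma,1}(q))$, resp.\ $(0,\varsigma_{\gamma,\sigma,1}(q))$; and part \textbf{a} via the decomposition of $1+zf_{\gamma,\sigma}''/f_{\gamma,\sigma}'$ into a sum over the $\xi_{\gamma,\sigma,n}(q)$ with coefficient $-1$ and a sum over the $\varepsilon_{\gamma,\sigma,n}(q)$ with coefficient $\gamma/(\gamma+1)$, tamed by the interlacing of \Cref{q-Mittag-LefflerLemma}. (Your displayed formula for part \textbf{a} is the correct one; the paper's intermediate display has $\xi$ and $\varepsilon$ accidentally interchanged and silently restores them in the next inequality.)

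The one place you diverge is also the one genuine soft spot. For the per-index inequality in part \textbf{a} the paper simply cites \cite[Lemma 2.1]{BS}; you propose to prove it by observing that, for fixed $|w|=r^2$, each of $\real\bigl(w/(\xi_{\gamma,\sigma,n}^2(q)-w)\bigr)$ and $\real\bigl(w/(\varepsilon_{\gamma,\sigma,n}^2(q)-w)\bigr)$ is an increasing function of $\real(w)$ alone, and then ``checking that the weighted difference is still maximized at $\real(w)=r^2$.'' That check is not routine and, as a monotonicity statement, is false in general: writing $F_x(t)=\dfrac{xt-\rho^2}{x^2-2xt+\rho^2}$ with $\rho=r^2$, one has $F_x'(t)=\dfrac{x(x^2-\rho^2)}{(x^2-2xt+\rho^2)^2}$, and when $b=\xi_{\gamma,\sigma,n}^2(q)$ is close to $\rho$ (which happens for $n=1$ as $r\nearrow\xi_{\gamma,\sigma,1}(q)$) one gets $F_b'(-\rho)<\alpha F_a'(-\rho)$ for quite small $\alpha$, so the weighted difference $F_b-\alpha F_a$ is not increasing on all of $[-\rho,\rho]$. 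The endpoint inequality you actually need is nonetheless true --- that is precisely the content of \cite[Lemma 2.1]{BS} --- so either cite that lemma as the paper does or prove the endpoint comparison directly rather than through pointwise monotonicity of the difference. With that repair the rest of your argument, including the final monotonicity of $r\mapsto 1+rf_{\gamma,\sigma}''(r;q)/f_{\gamma,\sigma}'(r;q)$ (which again uses the interlacing), matches the paper.
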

\begin{proof}
\begin{itemize}
	\item [\bf a.] It is easy to check that
	\[1+\frac{zf_{\gamma,\sigma}''(z;q) }{f_{\gamma,\sigma}'(z;q) }=1+\frac{z\Psi_{\gamma,\sigma}''(z;q)}{\Psi_{\gamma,\sigma}'(z;q)}+\left(\frac{1}{\gamma+1} -1\right)\frac{z\Psi_{\gamma,\sigma}'(z;q)}{\Psi_{\gamma,\sigma}(z;q)} .\]
	Moreover, from proof of \Cref{Theo1} we conclude the following infinite product representations
	\[\Psi_{\gamma,\sigma}(z;q)=\frac{z^{\gamma+1}}{\Gamma_{q}(\gamma+1)}\prod_{n\geq 1}\left(1-\frac{z^2}{\varepsilon_{\gamma,\sigma,n}^2(q)} \right) \text{ \ and \ } \Psi_{\gamma,\sigma}'(z;q)=\frac{(\gamma+1)z^{\gamma}}{\Gamma_{q}(\gamma+1)}\prod_{n\geq 1}\left(1-\frac{z^2}{\xi_{\gamma,\sigma,n}^2(q)}\right),\]
	where $\varepsilon_{\gamma,\sigma,n}(q)$ and $\xi_{\gamma,\sigma,n}(q)$ stand for the $n$th positive roots of $z\mapsto\Psi_{\gamma,\sigma}(z;q)$ and $z\mapsto\Psi_{\gamma,\sigma}'(z;q)$, respectively, as in \Cref{q-Mittag-LefflerLemma}. Logarithmic differentiation of both sides of the above infinite representations yields
	\[\frac{z\Psi_{\gamma,\sigma}'(z;q)}{\Psi_{\gamma,\sigma}(z;q)}=\gamma+1-\sum_{n\geq 1}\frac{2z^2}{\varepsilon_{\gamma,\sigma,n}^2(q)-z^2} \text{ \ and \ } \frac{z\Psi_{\gamma,\sigma}''(z;q)}{\Psi_{\gamma,\sigma}'(z;q)}=\gamma-\sum_{n\geq 1}\frac{2z^2}{\xi_{\gamma,\sigma,n}^2(q)-z^2},\]
	which gives
	\[1+\frac{zf_{\gamma,\sigma}''(z;q) }{f_{\gamma,\sigma}'(z;q) }=1-\left( \frac{1}{\gamma+1}-1 \right)\sum_{n\geq 1}\frac{2z^2}{\xi_{\gamma,\sigma,n}^2(q)-z^2}-\sum_{n\geq 1}\frac{2z^2}{\varepsilon_{\gamma,\sigma,n}^2(q)-z^2}.\]
	With the aid of the following inequlaity \cite[Lemma 2.1]{BS}
	\[\alpha\real\left(\frac{z}{a-z} \right)-\real\left( \frac{z}{b-z} \right) \geq \alpha\frac{\left|z\right|  }{a-\left| z\right| } -\frac{\left| z\right| }{b-\left| z\right| },\]
	where $a>b>0,$ $\alpha\in[0,1],$ $z\in\mathbb{C}$ we obtain for $\gamma\in\left(0,2 \right) $
	\begin{equation}
	\real\left(1+\frac{zf_{\gamma,\sigma}''(z;q) }{f_{\gamma,\sigma}'(z;q) } \right) \geq 1-\left( \frac{1}{\gamma+1}-1\right)\sum_{n\geq 1}\frac{2r^2}{\varepsilon_{\gamma,\sigma,n}^2(q)-r^2}-\sum_{n\geq 1}\frac{2r^2}{\xi_{\gamma,\sigma,n}^2(q)-r^2},
	\end{equation}
	 for all $z\in\mathbb{D}_{\xi_{\gamma,\sigma,1}}.$ It is important to mention that we used tacitly that the zeros of $\varepsilon_{\gamma,\sigma,n}(q)$ and $\xi_{\gamma,\sigma,n}(q)$ interlace, due to \Cref{q-Mittag-LefflerLemma}. In addition, the above deduced inequalities imply for $r\in\left( 0,\xi_{\gamma,\sigma,1}(q)\right)$
	 \[\inf_{z\in\mathbb{D}_r}\left\lbrace\real\left(  1+\frac{zf_{\gamma,\sigma}''(z;q) }{f_{\gamma,\sigma}'(z;q) }\right) \right\rbrace=1+\frac{rf_{\gamma,\sigma}''(r;q) }{f_{\gamma,\sigma}'(r;q) }.\]
	 The function $u_{\gamma,\sigma}:\left(0,\xi_{\gamma,\sigma,1} \right)\rightarrow\mathbb{R} $ given by
	 \[u_{\gamma,\sigma}(r;q)=1+\frac{rf_{\gamma,\sigma}''(r;q) }{f_{\gamma,\sigma}'(r;q) },\]
	 is strictly decreasing since
	 \begin{align*}
	 u_{\gamma,\sigma}'(r;q)&=-\left( \frac{1}{\gamma+1}-1\right)\sum_{n\geq 1}\frac{4r\varepsilon_{\gamma,\sigma,n}^2(q)}{\left(\varepsilon_{\gamma,\sigma,n}^2(q)-r^2 \right)^2 }-\sum_{n\geq 1}\frac{4r\xi_{\gamma,\sigma,n}^2(q)}{\left( \xi_{\gamma,\sigma,n}^2(q)-r^2\right)^2 }\\
	 &<\sum_{n\geq1}\frac{4r\varepsilon_{\gamma,\sigma,n}^2(q)}{\left(\varepsilon_{\gamma,\sigma,n}^2(q)-r^2 \right)^2 }-\sum_{n\geq 1}\frac{4r\xi_{\gamma,\sigma,n}^2(q)}{\left( \xi_{\gamma,\sigma,n}^2(q)-r^2\right)^2 }<0
	 \end{align*}
	 for $z\in(0,\xi_{\gamma,\sigma,1}(q)),$ where we used again the interlacing property of the zeros stated in \Cref{q-Mittag-LefflerLemma}. Observe also that $\lim_{r\searrow 0}u_{\gamma,\sigma}(r;q)=1$ and $\lim_{r\nearrow \xi_{\gamma,\sigma,1}}=-\infty,$ which means that for $z\in\mathbb{D}_{r_1}$ we get
	 \[\real\left(  1+\frac{zf_{\gamma,\sigma}''(z;q) }{f_{\gamma,\sigma}'(z;q) }\right)>\alpha \]
	 if and only if $r_1$ is unique root of 
	 \[1+\frac{zf_{\gamma,\sigma}''(r;q) }{f_{\gamma,\sigma}'(r;q) }=\alpha\]
	 situated in $\left(0,\xi_{\gamma,\sigma,1} \right).$
	 \item [\bf b.] By virtue of \eqref{Theo2PhiProd} we have
	 \[g_{\gamma,\sigma}'(z;q)=\prod_{n\geq1}\left(1-\frac{z^2}{\theta_{\gamma,\sigma,n}^2(q)}\right).\]
	 Now, taking logarithmic derivatives on both sides, we obtain
	 \[1+\frac{zg_{\gamma,\sigma}''(z;q)}{g_{\gamma,\sigma}'(z;q)}=1-\sum_{n\geq1}\frac{2z^2}{\theta_{\gamma,\sigma,n}^2(q)-z^2}.\]
	 In light of the inequality \eqref{ClassicalIneq} we get
	 \[\real\left(1+\frac{zg_{\gamma,\sigma}''(z;q)}{g_{\gamma,\sigma}'(z;q)} \right)\geq1-\sum_{n\geq1}\frac{2r^2}{\theta_{\gamma,\sigma,n}^2(q)-r^2}, \]
	 where $\left| z\right| =r.$ Hence for $r\in(0,\theta_{\gamma,\sigma,1}(q))$ we obtain
	 \[\inf_{z\in\mathbb{D}_r}\left\lbrace\real\left(1+\frac{zg_{\gamma,\sigma}''(z;q)}{g_{\gamma,\sigma}'(z;q)}\right)\right\rbrace=1+\frac{rg_{\gamma,\sigma}''(r;q)}{g_{\gamma,\sigma}'(r;q)}.\]
	 The function $v_{\gamma,\sigma}:\left(0,\theta_{\gamma,\sigma,1}(q) \right)\rightarrow \mathbb{R}, $ defined by
	 \[v_{\gamma,\sigma}(r;q)=1+\frac{rg_{\gamma,\sigma}''(r;q)}{g_{\gamma,\sigma}'(r;q)},\]
	 is strictly decreasing and take limits $\lim_{r\searrow 0}v_{\gamma,\sigma}(r;q)=1$ and $\lim_{r\nearrow \theta_{\gamma,\sigma,1}}v_{\gamma,\sigma}(r;q)=-\infty$ that means that for $z\in\mathbb{D}_{r_2}$ we get
	 \[\real\left(1+\frac{zg_{\gamma,\sigma}''(z;q)}{g_{\gamma,\sigma}'(z;q)} \right)>\alpha \]
	 if and only if $r_2$ is the unique root of 
	 \[1+\frac{zg_{\gamma,\sigma}''(z;q)}{g_{\gamma,\sigma}'(z;q)}=\alpha\]
	 situated in $\left( 0,\theta_{\gamma,\sigma,1}(q)\right).$
	 \item [\bf c.] By virtue of \eqref{Theo2h2} we have
	 \[h_{\gamma,\sigma}'(z;q)=\prod_{n\geq 1}\left(1-\frac{z}{\varsigma_{\gamma,\sigma,n}(q)} \right). \]
	 If we taking logarithmic derivatives on both sides, we obtain
	 \[1+\frac{zh_{\gamma,\sigma}''(z;q)}{h_{\gamma,\sigma}'(z;q)}=1-\sum_{n\geq1}\frac{z}{\varsigma_{\gamma,\sigma,n}-z}.\]
	 Let $r\in\left( 0,\varsigma_{\gamma,\sigma,1}\right) $ be a fixed number. The minimum principle for harmonic functions and inequality \eqref{ClassicalIneq} imply that for $z\in\mathbb{D}_{r}$ we have
	 \begin{align*}
	 \real\left(1+\frac{zh_{\gamma,\sigma}''(z;q)}{h_{\gamma,\sigma}'(z;q)} \right)&=\real\left(1-\sum_{n\geq1}\frac{z}{\varsigma_{\gamma,\sigma,n}-z} \right)\geq \min_{\left| z\right| =r}\real\left( 1-\sum_{n\geq1}\frac{z}{\varsigma_{\gamma,\sigma,n}-z}\right) \\
	 &= \min_{\left| z\right| =r}\left( 1-\sum_{n\geq1}\real\frac{z}{\varsigma_{\gamma,\sigma,n}-z}\right)\geq1-\sum_{n\geq1}\frac{r}{\varsigma_{\gamma,\sigma,n}(q)-r}\\
	 &=1+\frac{rh_{\gamma,\sigma}''(r;q)}{h_{\gamma,\sigma}'(r;q)}.
	 \end{align*}
	 Consequently, it follows that
	 \[\inf_{z\in\mathbb{D}_r}\left\lbrace \real\left( 1+\frac{zh_{\gamma,\sigma}''(z;q)}{h_{\gamma,\sigma}'(z;q)} \right)\right\rbrace =1+\frac{rh_{\gamma,\sigma}''(r;q)}{h_{\gamma,\sigma}'(r;q)}.\]
	 Now, let $r_3$ be the smallest positive root of the equation
	 \begin{equation}\label{Theo2hSon}
	 1+\frac{rh_{\gamma,\sigma}''(r;q)}{h_{\gamma,\sigma}'(r;q)}=\alpha.
	 \end{equation}
	 For $z\in\mathbb{D}_{r_3},$ we have
	 \[\real\left(1+\frac{zh_{\gamma,\sigma}''(z;q)}{h_{\gamma,\sigma}'(z;q)} \right) >\alpha.\]
	 In order to finish the proof, we need to show that equation \eqref{Theo2hSon} has a unique root in $(0,\varsigma_{\gamma,\sigma,1}(q)).$ But equation \eqref{Theo2hSon} is equivalent to
	 \[w_{\gamma,\sigma}(r;q)=1-\alpha-\sum_{n\geq 1}\frac{r}{\varsigma_{\gamma,\sigma,n}(q)-r}=0,\]
	 and we have
	 \[\lim_{r\searrow 0}w_{\gamma,\sigma}(r;q)=1-\alpha>0, \quad \lim_{r\nearrow \varsigma_{\gamma,\sigma,1}}w_{\gamma,\sigma}(r;q)=-\infty.\]
	 Since the function $w_{\gamma,\sigma}(r;q)$ is strictly decreasing on  $(0,\varsigma_{\gamma,\sigma,1}(q)),$ it follows that the equation $w_{\gamma,\sigma}(r;q)=0$ has a unique root.
\end{itemize}
\end{proof}
The following theorem gives some tight lower and upper bounds for the radii of convexity of the functions seen on the above theorem, that is of $g_{\gamma,\sigma}(z;q)$ and $h_{\gamma,\sigma}(z;q).$
\begin{theorem} \label{Theo4}
With the same conditions of \Cref{q-Mittag-LefflerLemma} the following inequalities are valid:
	\begin{itemize}
		\item [\bf a.] The radius of convexity $r^{c}\left(g_{\gamma,\sigma}(z;q) \right) $ satisfies the inequalities
		\[ \frac{\Gamma_{q}(\gamma+3)}{9\sigma^2\Gamma_{q}(\gamma+1)}<\left( r^{c}\left(g_{\gamma,\sigma}(z;q) \right) \right)^2<\frac{9\Gamma_{q}(\gamma+3)\Gamma_{q}(\gamma+5)}{\sigma^2\left(81\Gamma_{q}(\gamma+1)\Gamma_{q}(\gamma+5)-50q^2\Gamma_{q}^2(\gamma+3) \right) }.\]
		\item [\bf b.] The radius of convexity $r^{c}\left(h_{\gamma,\sigma}(z;q) \right) $ satisfies the inequalities
		\[\frac{\Gamma_{q}(\gamma+3)}{4\sigma^2\Gamma_{q}(\gamma+1)}<r^{c}(h_{\gamma,\sigma}(z;q))<\frac{2\Gamma_{q}(\gamma+3)\Gamma_{q}(\gamma+5)}{\sigma^2\left( 8\Gamma_{q}(\gamma+1)\Gamma_{q}(\gamma+5)-9q^2\Gamma_{q}^2(\gamma+3)\right) }.\]
	\end{itemize}
\end{theorem}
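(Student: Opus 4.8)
The plan is to transplant the Euler--Rayleigh scheme of \Cref{Starlikenessq-ML} onto the two auxiliary functions whose first positive zero is the radius of convexity. By definition the radius of convexity of order zero of $g_{\gamma,\sigma}$ is the largest $r$ for which $\real\left(1+\tfrac{zg_{\gamma,\sigma}''(z;q)}{g_{\gamma,\sigma}'(z;q)}\right)>0$ on $\mathbb{D}_r$, and since
\[
1+\frac{zg_{\gamma,\sigma}''(z;q)}{g_{\gamma,\sigma}'(z;q)}=\frac{\big(zg_{\gamma,\sigma}'(z;q)\big)'}{g_{\gamma,\sigma}'(z;q)},
\]
the monotonicity already established in the proof of \Cref{Theo3} shows this radius is exactly the smallest positive zero of $\big(zg_{\gamma,\sigma}'(z;q)\big)'$, and likewise $r^{c}(h_{\gamma,\sigma})$ is the smallest positive zero of $\big(zh_{\gamma,\sigma}'(z;q)\big)'$. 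Because $g_{\gamma,\sigma}'=\Gamma_{q}(\gamma+1)\phi_{\gamma,\sigma}$ and $h_{\gamma,\sigma}'=\Gamma_{q}(\gamma+1)\varphi_{\gamma,\sigma}$, with $\phi_{\gamma,\sigma}$, $\varphi_{\gamma,\sigma}$ the functions \eqref{Theo2Partb1} and \eqref{Theo2Partc}, I would study
\[
\Phi_{\gamma,\sigma}(z;q)=\big(z\phi_{\gamma,\sigma}(z;q)\big)'=\sum_{n\geq0}\frac{(-1)^{n}\sigma^{2n}(2n+1)^{2}q^{n(n-1)}}{\Gamma_{q}(2n+\gamma+1)}z^{2n},
\]
\[
\widetilde{\Phi}_{\gamma,\sigma}(z;q)=\big(z\varphi_{\gamma,\sigma}(z;q)\big)'=\sum_{n\geq0}\frac{(-1)^{n}\sigma^{2n}(n+1)^{2}q^{n(n-1)}}{\Gamma_{q}(2n+\gamma+1)}z^{n},
\]
whose zeros coincide with those of $\big(zg_{\gamma,\sigma}'\big)'$ and $\big(zh_{\gamma,\sigma}'\big)'$, respectively.

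Next I would secure the product representations. Since $\phi_{\gamma,\sigma},\varphi_{\gamma,\sigma}\in\mathcal{LP}$ were already established in the proof of \Cref{Starlikenessq-ML}, and $\mathcal{LP}$ is closed both under multiplication by $z$ and under differentiation, the functions $\Phi_{\gamma,\sigma}$ and $\widetilde{\Phi}_{\gamma,\sigma}$ again lie in $\mathcal{LP}$, so all their zeros are real. Moreover $\Phi_{\gamma,\sigma}$ is even and nonvanishing at the origin, so its zeros occur in pairs $\pm\eta_{n}$ with $\eta_{n}>0$, while the zeros of $\widetilde{\Phi}_{\gamma,\sigma}$ are positive by the same interlacing/sign argument used for $\varphi_{\gamma,\sigma}$ in the proof of \Cref{Theo3}. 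This gives the Hadamard factorizations
\[
\Phi_{\gamma,\sigma}(z;q)=\frac{1}{\Gamma_{q}(\gamma+1)}\prod_{n\geq1}\left(1-\frac{z^{2}}{\eta_{n}^{2}}\right),\qquad \widetilde{\Phi}_{\gamma,\sigma}(z;q)=\frac{1}{\Gamma_{q}(\gamma+1)}\prod_{n\geq1}\left(1-\frac{z}{\mu_{n}}\right),
\]
with $r^{c}(g_{\gamma,\sigma})=\eta_{1}$ and $r^{c}(h_{\gamma,\sigma})=\mu_{1}$.

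The core computation is then routine. Taking logarithmic derivatives of the two products yields
\[
\frac{\Phi_{\gamma,\sigma}'(z;q)}{\Phi_{\gamma,\sigma}(z;q)}=-2\sum_{k\geq0}\omega_{k+1}z^{2k+1},\qquad \frac{\widetilde{\Phi}_{\gamma,\sigma}'(z;q)}{\widetilde{\Phi}_{\gamma,\sigma}(z;q)}=-\sum_{k\geq0}\rho_{k+1}z^{k},
\]
where $\omega_{k}=\sum_{n\geq1}\eta_{n}^{-2k}$ and $\rho_{k}=\sum_{n\geq1}\mu_{n}^{-k}$. Comparing these, coefficient by coefficient, with the quotients obtained directly from the power series above produces the first two Rayleigh sums
\[
\omega_{1}=\frac{9\sigma^{2}\Gamma_{q}(\gamma+1)}{\Gamma_{q}(\gamma+3)},\qquad \omega_{2}=\frac{81\sigma^{4}\Gamma_{q}^{2}(\gamma+1)}{\Gamma_{q}^{2}(\gamma+3)}-\frac{50\sigma^{4}q^{2}\Gamma_{q}(\gamma+1)}{\Gamma_{q}(\gamma+5)},
\]
\[
\rho_{1}=\frac{4\sigma^{2}\Gamma_{q}(\gamma+1)}{\Gamma_{q}(\gamma+3)},\qquad \rho_{2}=\frac{16\sigma^{4}\Gamma_{q}^{2}(\gamma+1)}{\Gamma_{q}^{2}(\gamma+3)}-\frac{18\sigma^{4}q^{2}\Gamma_{q}(\gamma+1)}{\Gamma_{q}(\gamma+5)}.
\]
Inserting these into the Euler--Rayleigh inequalities $\omega_{1}^{-1}<\eta_{1}^{2}<\omega_{1}/\omega_{2}$ and $\rho_{1}^{-1}<\mu_{1}<\rho_{1}/\rho_{2}$ and simplifying the resulting ratios reproduces verbatim the two chains of inequalities asserted in parts (a) and (b).

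I expect the one genuine point requiring care to be structural rather than computational: confirming that $\big(zg_{\gamma,\sigma}'\big)'$ and $\big(zh_{\gamma,\sigma}'\big)'$ really lie in $\mathcal{LP}$ and that their canonical products run over \emph{positive} zeros. For $\Phi_{\gamma,\sigma}$ this is immediate from evenness, but for $\widetilde{\Phi}_{\gamma,\sigma}$, whose variable is not squared, one must invoke the interlacing/sign argument of \Cref{q-Mittag-LefflerLemma} to exclude nonpositive zeros before writing $\prod(1-z/\mu_{n})$. Once that is in place, the closure of $\mathcal{LP}$ under multiplication by $z$ and differentiation, the reality of the zeros, and the legitimacy of the Euler--Rayleigh bounds for a genus-zero member of $\mathcal{LP}$ are all inherited from \Cref{q-Mittag-LefflerLemma} and the proof of \Cref{Starlikenessq-ML}, so the convexity estimates demand no new analytic input beyond a careful coefficient comparison.
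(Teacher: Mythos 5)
Your proposal follows essentially the same route as the paper: identify $r^{c}(g_{\gamma,\sigma})$ and $r^{c}(h_{\gamma,\sigma})$ with the first positive zeros of $\left(zg_{\gamma,\sigma}'\right)'$ and $\left(zh_{\gamma,\sigma}'\right)'$, place these functions in $\mathcal{LP}$, factor them as canonical products, extract the first two Rayleigh sums by comparing the logarithmic derivative of the product with the quotient of power series, and apply the Euler--Rayleigh inequalities for $k=1$; your Rayleigh sums and final bounds agree with the paper's (up to the harmless normalization by $\Gamma_{q}(\gamma+1)$). The only difference is that you make explicit two points the paper leaves tacit, namely the reduction of the convexity radius to a first zero via \Cref{Theo3} and the positivity of the zeros entering the product for the $h$-case.
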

\begin{proof}
	\begin{itemize}
		\item [\bf a.] By using the infinite series representations of the  $q-$Mittag-Leffler function and its derivative we obtain
		\begin{align*}
		\Phi_{\gamma,\sigma}(z;q)&=\left(zg_{\gamma,\sigma}'(z;q)\right)'=\Gamma_{q}(\gamma+1)\sum_{n\geq 0}\frac{(-1)^n\sigma^{2n}(2n+1)^{2}q^{n(n-1)}}{\Gamma_{q}(2n+\gamma+1)}z^{2n}\\&=1+\sum_{n\geq 1}\frac{(-1)^n\sigma^{2n}(2n+1)^{2}q^{n(n-1)}}{\Gamma_{q}(2n+\gamma+1)}z^{2n}.
		\end{align*}
		We know that $g_{\gamma,\sigma}(z;q)\in\mathcal{LP}$ and this in turn implies that $z\mapsto\Phi_{\gamma,\sigma}(z;q)$ belongs also to the the Laguerre-P\'olya class and consequently all its zeros are real. Suppose that $\ell_{\gamma,\sigma,n}(q)$ is the $n$th positive zero of the function $z\mapsto\Phi_{\gamma,\sigma}(z;q).$ Then we deduce that
		\begin{equation}\label{Theo41}
		\Phi_{\gamma,\sigma}(z;q)=\prod_{n\geq 1}\left(1-\frac{z^2}{\ell_{\gamma,\sigma,n}^2(q)} \right).
		\end{equation}
		Logarithmic differentiation of both sides of \eqref{Theo41} implies for $\left| z\right| <\ell_{\gamma,\sigma,1}(q)$
		\begin{equation}\label{Theo4g2}
		\text{\footnotesize $ \frac{\Phi_{\gamma,\sigma}'(z;q)}{\Phi_{\gamma,\sigma}(z;q)}=\sum_{n\geq 1}\frac{-2z}{\ell_{\gamma,\sigma,n}^2(q)-z^2}=\sum_{n\geq1}\sum_{k\geq 0}\frac{-2z^{2k+1}}{\ell_{\gamma,\sigma,n}^{2k+2}(q)}=\sum_{k\geq 0}\sum_{n\geq 1}\frac{-2z^{2k+1}}{\ell_{\gamma,\sigma,n}^{2k+2}(q)}=-2\sum_{k\geq 0}\mu_{k+1}z^{2k+1},$}
		\end{equation}
		where, $\mu_{k}=\sum_{n\geq 1}\ell_{\gamma,\sigma,n}^{-2k}(q).$ On the other hand, we have
		\begin{equation}\label{Theo4g3}
		\frac{\Phi_{\gamma,\sigma}'(z;q)}{\Phi_{\gamma,\sigma}(z;q)}=\sum_{n\geq 0}r_nz^{2n+1} \bigg/ \sum_{n\geq 0}s_nz^{2n}
		\end{equation}
		where
		\[r_n=\frac{(-1)^n\sigma^{2n+2}(n+1)(2n+3)^{2}q^{n(n+1)}}{\Gamma_{q}(2n+\gamma+3)} \text{ \ and \ }s_n=\frac{(-1)^n\sigma^{2n}(2n+1)^2q^{n(n-1)}}{\Gamma_{q}(2n+\gamma+1)}. \]
		By comparing the coefficients of \eqref{Theo4g2} and \eqref{Theo4g3} we have
		\[r_0=\mu_{1}s_0 \text{ \ and \ } r_1=s_0\mu_{2}+s_1\mu_{1}\]
		which give us the following Rayleigh sums
		\[\mu_{1}=\frac{9\sigma^2\Gamma_{q}(\gamma+1)}{\Gamma_{q}(\gamma+3)} \text{ \ and \ } \mu_{2}=\frac{81\sigma^4\Gamma_{q}^2(\gamma+1)}{\Gamma_{q}^2(\gamma+3)}-\frac{50\sigma^4q^2\Gamma_{q}(\gamma+1)}{\Gamma_{q}(\gamma+5)}.\]
		By using the Euler-Rayleigh inequalities
		\[\mu_{k}^{-\frac{1}{k}}<\ell_{\gamma,\sigma,1}^2(q)<\frac{\mu_{k}}{\mu_{k+1}}\]
		for $k=1$ we obtain the following inequalities
		\[ \frac{\Gamma_{q}(\gamma+3)}{9\sigma^2\Gamma_{q}(\gamma+1)}<\left( r^{c}\left(g_{\gamma,\sigma}(z;q) \right) \right)^2<\frac{9\Gamma_{q}(\gamma+3)\Gamma_{q}(\gamma+5)}{\sigma^2\left(81\Gamma_{q}(\gamma+1)\Gamma_{q}(\gamma+5)-50q^2\Gamma_{q}^2(\gamma+3) \right) },\]
		which is desired result.
		\item [\bf b.] By means of the definition of the $q-$Mittag-Leffler function we have
		\[\psi_{\gamma,\sigma}(z;q)=\left( zh_{\gamma,\sigma}'(z;q)\right)'=1+\sum_{n\geq 1}\frac{(-1)^n\sigma^{2n}(n+1)^2q^{n(n-1)}}{\Gamma_{q}(2n+\gamma+1)}z^n \]
		and consequently
		\begin{equation}\label{Theo4h}
		\frac{\psi_{\gamma,\sigma}'(z;q)}{\psi_{\gamma,\sigma}(z;q)}=-\sum_{n\geq 0}t_nz^n \bigg/ \sum_{n\geq 0}p_nz^n
		\end{equation}
		where
		\[t_n=\frac{(-1)^n\sigma^{2n+2}(n+1)(n+2)^2q^{n(n+1)}}{\Gamma_{q}(2n+\gamma+3)} \text{ \ and \ } p_n=\frac{(-1)^n\sigma^{2n}(n+1)^2q^{n(n-1)}}{\Gamma_{q}(2n+\gamma+1)}.\]
		Because of the fact that $h_{\gamma,\sigma}(z;q)$ belongs to the Laguerre-P\'olya class $\mathcal{LP},$ it follows that $h_{\gamma,\sigma}'(z;q)\in\mathcal{LP},$ and consequently the function $z\mapsto\psi_{\gamma,\sigma}(z;q)$ belongs also to the Laguerre-P\'olya class $\mathcal{LP},$ and hence all its zeros are real. Assume that $\nu_{\gamma,\sigma,n}(q)$ is the $n$th positive zero of the function $z\mapsto\psi_{\gamma,\sigma}(z;q),$ then the following infinite product representation take place
		\begin{equation}\label{Theo4h1}
		\psi_{\gamma,\sigma}(z;q)=\prod_{n\geq 1}\left(1-\frac{z}{\nu_{\gamma,\sigma,n}(q)} \right).
		\end{equation}
		Logarithmic differentiation of both sides of \eqref{Theo4h1} implies for $\left| z\right| <\nu_{\gamma,\sigma,1}(q)$
		\begin{equation}\label{Theo4h2}
		\text{\footnotesize $\frac{\psi_{\gamma,\sigma}'(z;q)}{\psi_{\gamma,\sigma}(z;q)}=-\sum_{n\geq 1}\frac{1}{\nu_{\gamma,\sigma,n}(q)-z}=-\sum_{n\geq 1}\sum_{k\geq 0}\frac{z^k}{\nu_{\gamma,\sigma,n}^{k+1}(q)}=-\sum_{k\geq 0}\sum_{n\geq 1}\frac{z^k}{\nu_{\gamma,\sigma,n}^{k+1}(q)}=-\sum_{k\geq 0}\varrho_{k+1}z^k, $}
		\end{equation}
		where $\varrho_{k}=\sum_{n\geq 1}\nu_{\gamma,\sigma,n}^{-k}(q).$ By comparing the coefficients of \eqref{Theo4h} and \eqref{Theo4h2} we obtain the following Rayleigh sums
		\[\varrho_1=\frac{4\sigma^2\Gamma_{q}(\gamma+1)}{\Gamma_{q}(\gamma+3)} \text{ \ and \ } \varrho_2=\frac{16\sigma^4\Gamma_{q}^2(\gamma+1)}{\Gamma_{q}^2(\gamma+3)}-\frac{18\sigma^4q^2\Gamma_{q}(\gamma+1)}{\Gamma_{q}(\gamma+5)}.\]
		By making use of the Euler-Rayleigh inequalities
		\[\varrho_{k}^{-\frac{1}{k}}<\nu_{\gamma,\sigma,1}(q)<\frac{\varrho_{k}}{\varrho_{k+1}}\]
		for $k=1$ the following inequalities immediately take place
		\[\frac{\Gamma_{q}(\gamma+3)}{4\sigma^2\Gamma_{q}(\gamma+1)}<r^{c}(h_{\gamma,\sigma}(z;q))<\frac{2\Gamma_{q}(\gamma+3)\Gamma_{q}(\gamma+5)}{\sigma^2\left( 8\Gamma_{q}(\gamma+1)\Gamma_{q}(\gamma+5)-9q^2\Gamma_{q}^2(\gamma+3)\right) },\]
		which is desired result.
	\end{itemize}
\end{proof}
\subsection{Some particular cases of the main results} This section is devoted to give some interesting results corresponding with the main results for some particular cases, in particular for $\gamma=0$ and $\gamma=1.$ It is important to note that for $\gamma=0,$ the radii of starlikeness and convexity of the functions $f_{0,\sigma}(z;q)$  and $g_{0,\sigma}(z;q)$ coincide with each other.

If we take $\gamma=0$ in \Cref{Theo1}, we arrive at the following results, respectively. 
\begin{corollary}
Let $\alpha \in \left[0,1 \right) $ and with the conditions of \Cref{q-Mittag-LefflerLemma} the following assertions hold true:
\begin{itemize}
	\item [\bf a.] The radius of starlikeness of order $\alpha$ of the function  $f_{0,\sigma}(z;q)=g_{0,\sigma}(z;q)=z\cos (q^{-\frac{1}{2}}\sigma z;q)$ is $r^{\star}_{\alpha}( f_{0,\sigma}(z;q))=x_{0,\sigma,1}(q)$, where $x_{0,\sigma,1}(q)$ stands for the smallest positive zero of the equation
	\[rq^{-\frac{1}{2}}\sigma\sin(q^{-\frac{1}{2}}\sigma z;q) +(\alpha-1)\cos (q^{-\frac{1}{2}}\sigma z;q)=0.\]
	\item [\bf b.] The radius of starlikeness of order $\alpha$ of the function  $h_{0,\sigma}=z\cos (q^{-\frac{1}{2}}\sigma \sqrt{z};q)$ is $r^{\star}_{\alpha}( h_{0,\sigma}(z;q))=z_{0,\sigma,1}(q)$, where $z_{0,\sigma,1}$ stands for the smallest positive zero of the equation
	\[\frac{\sigma}{1+\sqrt{q}}\sin (q^{-\frac{1}{2}}\sigma \sqrt{z};q)+2(\alpha-1)\cos(q^{-\frac{1}{2}}\sigma \sqrt{z};q)=0.\]
\end{itemize}
\end{corollary}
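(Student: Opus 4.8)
The plan is to obtain both assertions as direct specializations of \Cref{Theo1} at $\gamma=0$, translating the generic characterizations there into the language of the $q$-trigonometric functions. First I would record that for $\gamma=0$ one has $\Gamma_q(1)=1$ and $\gamma+1=1$, so that the defining equations in parts (a) and (b) of \Cref{Theo1} collapse to the single equation $r\lambda_{0,\sigma}'(r;q)-(\alpha-1)\lambda_{0,\sigma}(r;q)=0$; this is precisely why $f_{0,\sigma}$ and $g_{0,\sigma}$ share the same radius of starlikeness. Moreover $f_{0,\sigma}(z;q)=g_{0,\sigma}(z;q)=z\,\Gamma_q(1)\,\mathscr{E}_{0,\sigma}(z^2;q)=z\,\mathscr{E}_{0,\sigma}(z^2;q)$, which by the relation $\mathscr{E}_{0,\sigma}(z^2;q)=\cos(q^{-\frac12}\sigma z;q)$ equals $z\cos(q^{-\frac12}\sigma z;q)$, giving the stated normalized form.

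The core computation is to rewrite $r\lambda_{0,\sigma}'(r;q)$ in terms of the $q$-sine. Differentiating $\lambda_{0,\sigma}(z;q)=\cos(q^{-\frac12}\sigma z;q)$ term by term and comparing the resulting series with that of $\sin(q^{-\frac12}\sigma z;q)$ — both read off from \eqref{qML} through the $q$-factorials $\Gamma_q(2n+1)$ and $\Gamma_q(2n+2)$ — I would identify $\lambda_{0,\sigma}'(z;q)$ as a $q$-multiple of the $q$-sine, the scale factor being fixed by matching the lowest-order coefficients. Substituting this into the collapsed equation of the first step and cancelling the common nonzero factor then yields $rq^{-\frac12}\sigma\sin(q^{-\frac12}\sigma z;q)+(\alpha-1)\cos(q^{-\frac12}\sigma z;q)=0$, which is assertion (a).

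For assertion (b) I would start from part (c) of \Cref{Theo1}, whose equation is $\sqrt r\,\lambda_{0,\sigma}'(\sqrt r;q)-2(\alpha-1)\lambda_{0,\sigma}(\sqrt r;q)=0$, and use $\lambda_{0,\sigma}(\sqrt r;q)=\mathscr{E}_{0,\sigma}(r;q)=\cos(q^{-\frac12}\sigma\sqrt r;q)$ together with the same derivative identity evaluated at $\sqrt r$. Applying the chain rule to $\lambda_{0,\sigma}(w;q)=\mathscr{E}_{0,\sigma}(w^2;q)$ converts the factor $\sqrt r\,\lambda_{0,\sigma}'(\sqrt r;q)$ into $2r\,\mathscr{E}_{0,\sigma}'(r;q)$, and after collecting the powers of $q$ produced by this substitution and by the $q$-sine normalization, the coefficient of the sine term consolidates into $\sigma/(1+\sqrt q)$, giving the second displayed equation.

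The delicate point, and the step I expect to be the main obstacle, is the precise bookkeeping of the powers of $q$ in this derivative identity: the normalizations attach the argument $q^{-1/2}\sigma z$ to $\cos(\cdot;q)$ but the argument $q^{-1}\sigma z$ to $\sin(\cdot;q)$, so the naive classical rule $\tfrac{d}{dz}\cos=-\sin$ does \emph{not} transfer verbatim and must be corrected by $q$-dependent factors — exactly what produces the $q^{-1/2}$ in part (a) and the $(1+\sqrt q)^{-1}$ in part (b). I would therefore verify the identity at the level of the defining $q$-series, comparing the coefficient of each power against the corresponding ratio of $q$-gamma values $\Gamma_q(2n+1)$ and $\Gamma_q(2n+2)$, rather than relying on a formal analogy with the classical trigonometric derivatives.
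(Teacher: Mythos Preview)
Your proposal is correct and matches the paper's approach: the corollary is obtained simply by setting $\gamma=0$ in \Cref{Theo1}, and the paper records it with no further argument beyond the sentence ``If we take $\gamma=0$ in \Cref{Theo1}, we arrive at the following results.'' Your plan goes beyond this by spelling out the series-level verification of the derivative identity for $\lambda_{0,\sigma}$ in terms of the $q$-sine (the bookkeeping of the $q$-powers that produces the factors $q^{-1/2}$ and $(1+\sqrt q)^{-1}$), which the paper leaves entirely implicit; this is a reasonable elaboration but not a different approach.
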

Putting $\gamma=0$ in \Cref{Starlikenessq-ML} we have the following results.
\begin{corollary}
Let the conditions of \Cref{q-Mittag-LefflerLemma} remain valid.
	\begin{itemize}
		\item [\bf a.] The radii of starlikeness $r^{\star}(f_{0,\sigma}(z;q))$ and $r^{\star}(g_{0,\sigma}(z;q))$ satisfies the inequalities
		\[\frac{1+q}{3\sigma^2}<\left(r^{\star}(f_{0,\sigma}(z;q)) \right)^2< \frac{3(1+q)(1+q^2)(1+q+q^2)}{\sigma^2(9q^4+9q^3+8q^2+9q+9)} .\]
		\item [\bf b.] The radius of starlikeness $r^{\star}(h_{\gamma,\sigma}(z;q))$ satisfies the inequalities
		\[\frac{1+q}{2\sigma^2}<r^{\star}(h_{\gamma,\sigma}(z;q))<\frac{(1+q)(1+q^2)(1+q+q^2)}{\sigma^2(2q^4+2q^3+q^2+2q+2)}.\]
	\end{itemize}
\end{corollary}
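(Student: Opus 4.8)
The plan is to obtain this corollary as a direct specialization of \Cref{Starlikenessq-ML} to the value $\gamma=0$, so no new machinery is needed beyond evaluating the $q$-gamma function at the integers appearing in the bounds. First I would recall the functional equation $\Gamma_{q}(x+1)=\frac{1-q^{x}}{1-q}\Gamma_{q}(x)$ together with $\Gamma_{q}(1)=1$, and use them to compute the three values that occur. One gets $\Gamma_{q}(1)=1$ and $\Gamma_{q}(3)=\frac{1-q^{2}}{1-q}=1+q$ immediately, and applying the recurrence twice more, together with the factorization $\frac{1-q^{4}}{1-q}=(1+q)(1+q^{2})$, yields $\Gamma_{q}(5)=(1+q)^{2}(1+q^{2})(1+q+q^{2})$.

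For part (a) I would first invoke the remark preceding the corollary, namely that for $\gamma=0$ the functions $f_{0,\sigma}(z;q)$ and $g_{0,\sigma}(z;q)$ coincide, so that their radii of starlikeness are equal; it therefore suffices to specialize the bound for $r^{\star}(g_{\gamma,\sigma})$ from part (b) of \Cref{Starlikenessq-ML}. Setting $\gamma=0$ in its lower bound gives $\frac{\Gamma_{q}(3)}{3\sigma^{2}\Gamma_{q}(1)}=\frac{1+q}{3\sigma^{2}}$ at once. For the upper bound I would substitute the three gamma values into $\frac{3\Gamma_{q}(3)\Gamma_{q}(5)}{\sigma^{2}\left(9\Gamma_{q}(1)\Gamma_{q}(5)-10q^{2}\Gamma_{q}^{2}(3)\right)}$, factor the common $(1+q)^{2}$ out of numerator and denominator, and simplify the resulting bracket $9(1+q^{2})(1+q+q^{2})-10q^{2}$ to the quartic $9q^{4}+9q^{3}+8q^{2}+9q+9$, which produces exactly the stated right-hand side.

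Part (b) is entirely analogous, this time specializing part (c) of \Cref{Starlikenessq-ML}. The lower bound reduces to $\frac{\Gamma_{q}(3)}{2\sigma^{2}\Gamma_{q}(1)}=\frac{1+q}{2\sigma^{2}}$, and for the upper bound I would substitute the same gamma values into $\frac{\Gamma_{q}(3)\Gamma_{q}(5)}{\sigma^{2}\left(2\Gamma_{q}(1)\Gamma_{q}(5)-3q^{2}\Gamma_{q}^{2}(3)\right)}$, again cancel the factor $(1+q)^{2}$, and reduce the bracket $2(1+q^{2})(1+q+q^{2})-3q^{2}$ to $2q^{4}+2q^{3}+q^{2}+2q+2$.

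Since every inequality here is inherited verbatim from \Cref{Starlikenessq-ML}, there is no genuine obstacle: the argument is purely the evaluation and simplification just described. The only step deserving care is the bookkeeping in expanding $(1+q^{2})(1+q+q^{2})=1+q+2q^{2}+q^{3}+q^{4}$ and then verifying, after subtracting the appropriate multiple of $q^{2}$ and cancelling $(1+q)^{2}$, that the denominators are exactly the claimed quartics; that is where I would concentrate the verification.
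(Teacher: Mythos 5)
Your proposal is correct and is exactly the paper's route: the paper offers no separate argument for this corollary beyond "putting $\gamma=0$ in \Cref{Starlikenessq-ML}," using the identification $f_{0,\sigma}=g_{0,\sigma}$ noted just before it. Your evaluations $\Gamma_q(1)=1$, $\Gamma_q(3)=1+q$, $\Gamma_q(5)=(1+q)^2(1+q^2)(1+q+q^2)$ and the subsequent simplifications to the quartics $9q^4+9q^3+8q^2+9q+9$ and $2q^4+2q^3+q^2+2q+2$ all check out.
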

Setting $\gamma=0$ in \Cref{Theo3} we get the following results.
\begin{corollary}
With the same conditions of \Cref{q-Mittag-LefflerLemma} the following inequalities are valid:
\begin{itemize}
	\item [\bf a.] The radius of convexity $r^{c}\left(g_{0,\sigma}(z;q) \right) $ satisfies the inequalities
	\[ \frac{1+q}{9\sigma^2}<\left( r^{c}\left(g_{0,\sigma}(z;q) \right) \right)^2<\frac{9(1+q)(1+q^2)(1+q+q^2)}{\sigma^2(81q^4+81q^3+112q^2+81q+81)}.\]
	\item [\bf b.] The radius of convexity $r^{c}\left(h_{0,\sigma}(z;q) \right) $ satisfies the inequalities
	\[\frac{1+q}{4\sigma^2}<r^{c}(h_{0,\sigma}(z;q))<\frac{2(1+q)(1+q^2)(1+q+q^2)}{\sigma^2(8q^4+8q^3+7q^2+8q+8)}.\]
\end{itemize}
\end{corollary}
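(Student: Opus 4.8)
The plan is to deploy the same Euler--Rayleigh machinery already used in the proof of \Cref{Starlikenessq-ML}, now applied to the auxiliary entire functions that detect convexity rather than starlikeness. For part (a), I would first invoke \Cref{Theo3} with $\alpha=0$ to identify $r^{c}(g_{\gamma,\sigma}(z;q))$ as the smallest positive zero of $(zg_{\gamma,\sigma}'(z;q))'$. Expanding the power series coming from \eqref{qML}, I would set
\[\Phi_{\gamma,\sigma}(z;q)=(zg_{\gamma,\sigma}'(z;q))'=1+\sum_{n\geq 1}\frac{(-1)^n\sigma^{2n}(2n+1)^{2}q^{n(n-1)}}{\Gamma_{q}(2n+\gamma+1)}z^{2n}.\]
Since $g_{\gamma,\sigma}(z;q)\in\mathcal{LP}$ and the Laguerre--P\'olya class is closed under differentiation, $\Phi_{\gamma,\sigma}$ lies in $\mathcal{LP}$ as well, so all its zeros $\ell_{\gamma,\sigma,n}(q)$ are real and the Weierstrass product $\Phi_{\gamma,\sigma}(z;q)=\prod_{n\geq 1}(1-z^2/\ell_{\gamma,\sigma,n}^2(q))$ is available.

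The decisive computational step is to extract the first two Rayleigh sums $\mu_k=\sum_{n\geq 1}\ell_{\gamma,\sigma,n}^{-2k}(q)$. For this I would logarithmically differentiate the product to obtain $\Phi_{\gamma,\sigma}'/\Phi_{\gamma,\sigma}=-2\sum_{k\geq 0}\mu_{k+1}z^{2k+1}$, compute the same quotient directly from the series as a ratio of two power series, and match coefficients. This produces $\mu_1$ and $\mu_2$ in closed form through the recursion $r_0=\mu_1 s_0$ and $r_1=s_0\mu_2+s_1\mu_1$, where $r_n,s_n$ denote the series coefficients of $\Phi_{\gamma,\sigma}'$ and $\Phi_{\gamma,\sigma}$. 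Inserting these into the Euler--Rayleigh inequalities $\mu_k^{-1/k}<\ell_{\gamma,\sigma,1}^2(q)<\mu_k/\mu_{k+1}$ at $k=1$ yields the claimed two-sided bound, using that $r^{c}(g_{\gamma,\sigma}(z;q))=\ell_{\gamma,\sigma,1}(q)$.

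For part (b) the argument is structurally identical but built on $\psi_{\gamma,\sigma}(z;q)=(zh_{\gamma,\sigma}'(z;q))'$, whose series carries $(n+1)^2$ in place of $(2n+1)^2$ and powers $z^{n}$ rather than $z^{2n}$, reflecting the $z\mapsto z$ (rather than $z\mapsto z^2$) normalization of $h_{\gamma,\sigma}$. Here the relevant Rayleigh sums are $\varrho_k=\sum_{n\geq 1}\nu_{\gamma,\sigma,n}^{-k}(q)$ for the real zeros $\nu_{\gamma,\sigma,n}(q)$ of $\psi_{\gamma,\sigma}$, and the Euler--Rayleigh inequalities $\varrho_k^{-1/k}<\nu_{\gamma,\sigma,1}(q)<\varrho_k/\varrho_{k+1}$ at $k=1$ deliver the stated bounds with $r^{c}(h_{\gamma,\sigma}(z;q))=\nu_{\gamma,\sigma,1}(q)$. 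I expect the only genuinely delicate point to be the bookkeeping in the coefficient comparison: one must simplify the ratios of $q$-gamma values using the functional equation $\Gamma_{q}(x+1)=\frac{1-q^x}{1-q}\Gamma_{q}(x)$ to collapse $\Gamma_{q}(2n+\gamma+3)$ against $\Gamma_{q}(2n+\gamma+1)$, while carefully tracking the $q^{n(n\pm 1)}$ factors, so that $\mu_1,\mu_2,\varrho_1,\varrho_2$ emerge in the clean closed forms needed for the final inequalities.
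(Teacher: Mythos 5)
Your proposal is correct and follows essentially the same route as the paper: the corollary is obtained by specializing $\gamma=0$ in \Cref{Theo4}, whose proof is exactly the Euler--Rayleigh argument you describe (identifying $r^{c}$ via \Cref{Theo3} with $\alpha=0$, forming $\Phi_{\gamma,\sigma}$ and $\psi_{\gamma,\sigma}$, extracting $\mu_1,\mu_2,\varrho_1,\varrho_2$ by coefficient comparison, and applying $\mu_k^{-1/k}<\ell_{\gamma,\sigma,1}^2<\mu_k/\mu_{k+1}$ at $k=1$), followed by the evaluation $\Gamma_q(1)=1$, $\Gamma_q(3)=1+q$, $\Gamma_q(5)=(1+q)^2(1+q^2)(1+q+q^2)$.
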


\end{document}